\newtheorem{thm}{Theorem}[section]
 \newtheorem{lem}[thm]{Lemma}
 \newtheorem{prop}[thm]{Proposition}
 \theoremstyle{definition}
 \theoremstyle{remark}
 \newtheorem{rem}[thm]{Remark}
 \numberwithin{equation}{section}
\def\be#1 {\begin{equation} \label{#1}}
\newcommand{\ee}{\end{equation}}
\renewcommand{\phi}{\varphi}
\def\C{\mathbb C}
\def\R{\mathbb R}
\def\HH{\mathbb H}
\def\N{\mathbb N}
\def\E{\mathcal E}
\def\e{e}
\def\pa{\partial}
\def\eps{\epsilon}
\def\dis{\displaystyle}    
\definecolor{gr}{rgb}   {0.,   0.69,   0.23 }
\definecolor{bl}{rgb}   {0.,   0.5,   1. }
\definecolor{mg}{rgb}   {0.85,  0.,    0.85}
\definecolor{yl}{rgb}   {0.8,  0.7,   0.}
\definecolor{or}{rgb}  {0.7,0.2,0.2}
\newcommand{\wt}{\widetilde}
\renewcommand{\Re}{  {\mathfrak{Re}}  }
 \renewcommand{\Im}{   {\mathfrak{Im}} }
\newcommand{\ov}{  \overline  }
\newcommand\<{\langle}
\renewcommand\>{\rangle}
\begin{document}

\thanks{The author is supported by the grants   "BEKAM''  ANR-15-CE40-0001 and  "ISDEEC'' ANR-16-CE40-0013}

\address{Universit de Lorraine, CNRS, IECL, F-54000 Nancy, France}
\email{laurent.thomann@univ-lorraine.fr}

\title[On multi-solitons for coupled LLL equations]{On multi-solitons for coupled Lowest Landau Level equations}

\subjclass[2000]{35Q55 ; 37K05 ; 35C07 ; 35B08}    

\keywords{Nonlinear Schr\"odinger equation, Lowest Landau Level, stationary solutions, progressive waves, multi-solitons, growth of Sobolev norms.}

\begin{abstract} We consider a coupled system of nonlinear Lowest Landau Level equations. We first show the  existence of multi-solitons with an exponentially localised error term in space, and  then we prove a uniqueness result. We also show a long time stability result of the sum of  traveling waves having all the same speed, under the condition that they are localised far away enough from each other. Finally, we observe that these multi-solitons provide examples of dynamics for the linear Schr\"odinger equation with harmonic potential perturbed by a time-dependent potential.
 \end{abstract}

\maketitle

\section{Introduction and main results}

In this paper, we continue the study of a system of coupled Lowest Landau  Level (LLL) equations which was initiated in \cite{Schw-Tho}. Denote by $\E$   the   Bargmann-Fock space defined as 
$$
\mathcal{E} =\big \{\, u(z) = e^{-\frac{|z|^2}{2}} f(z)\,,\;f \; \mbox{entire\ holomorphic}\,\big\}\cap L^2(\C )
$$
and consider  $\Pi$  the     orthogonal projection on $\mathcal{E}$. The LLL system then reads
\begin{equation}\label{eq0} 
\left\{
\begin{aligned}
&i\partial_{t}u= \Pi (|v|^2 u), \quad   (t,z)\in \R\times \C,\\
&i\partial_{t}v=  \sigma \Pi (|u|^2 v),\\
&u(0,\cdot)=  u_0 \in\E,\; v(0,\cdot)=  v_0 \in\E,
\end{aligned}
\right.
\end{equation}
where $\sigma \in \{1,-1\}$ is fixed.  Such systems arise in the description of fast rotating Bose-Einstein condensates in interaction: for more details and references on the modeling, see~\cite{ABD, Ho, Mueller-Ho}, the introduction of \cite{GGT}, and references therein. The system \eqref{eq0} is Hamiltonian with the structure
\begin{equation*} 
\left\{
\begin{aligned}
&\dot u=-i\frac{\delta \mathcal{H}}{\delta \ov u},& \quad &\dot{\ov u}=i\frac{\delta \mathcal{H}}{\delta u},\\
&\dot v=-i\sigma  \frac{\delta \mathcal{H}}{\delta \ov v},& \quad &\dot{\ov v}=i\sigma \frac{\delta \mathcal{H}}{\delta v},
\end{aligned}
\right.
\end{equation*} 
where the Hamiltonian functional is given by 
$$\mathcal{H}(u,v)=\int_{\C}|u|^2|v|^2dL,$$
and where $L$ stands for Lebesgue measure on $\C$. For  mathematical results on LLL equations we refer to \cite{Nier, ABN, BiBiCrEv2, GGT,Clerck-Evnin}. \medskip

In  the case~$\sigma=-1$, we have constructed in~\cite{Schw-Tho} traveling-waves (solitons) solutions to \eqref{eq0} and the aim of the present work is to show the existence of multi-solitons and study some of their properties. When $\sigma=1$, such solutions are excluded, because their existence  would contradict the conservation laws of the system (see~\cite[Proposition~1.4]{Schw-Tho}). Therefore, from now on, we assume that $\sigma=-1$ and we consider the system 
 \begin{equation}\label{eq2} 
\left\{
\begin{aligned}
&i\partial_{t}u= \Pi (|v|^2 u), \quad   (t,z)\in \R\times \C,\\
&i\partial_{t}v=  - \Pi (|u|^2 v),\\
&u(0,\cdot)=  u_0 \in\E,\; v(0,\cdot)=  v_0 \in\E.
\end{aligned}
\right.
\end{equation}

There are many  results concerning the existence of multi-solitons for dispersive equations (including Korteweg-de Vries, Schr\"odinger, and wave equations) and we refer to the survey \cite{Martel} for references on the subject. More precisely, regarding the construction of multi-solitons for the nonlinear Schr\"odinger equations we address to the works \cite{Martel-Merle, CoteLeCoz,CoteMarMer} and to the  recent survey~\cite{LCT}. In \cite{KMR, Martel-Raphael}, the authors study strong interactions of solitons. We also mention the articles~\cite{Ianni-LeCoz, DeleCozWeis}  in which solitary waves with different speeds are constructed for Schr\"odinger systems.

  \subsection{Symmetries and conservation laws}
The system \eqref{eq2} is preserved by several symmetries, which induce conservation laws (see \cite[Section 2]{GGT} for more details). These symmetries are phase rotations
\begin{equation*} 
T_{\theta_1,\theta_2} : (u,v)(z) \mapsto \big(e^{i \theta_1 }u(z), e^{i \theta_2 }v(z)\big)  \qquad \mbox{for $(\theta_1, \theta_2)  \in \mathbb{T}^2$},
\end{equation*}
 space rotations 
\begin{equation*} 
L_{\theta} :(u,v)(z)  \mapsto \big(u(e^{i\theta}z), v(e^{i\theta}z) \big)  \qquad \mbox{for $\theta \in \mathbb{T}$},
\end{equation*}
and magnetic translations 
\begin{equation*} 
R_{\alpha} :(u,v)(z)  \mapsto \big(u(z+\alpha) e^{\frac{1}{2}(\overline z \alpha - z \overline{\alpha})}, v(z+\alpha) e^{\frac{1}{2}(\overline z \alpha - z \overline{\alpha})} \big) \qquad \mbox{for $\alpha \in \mathbb{C}$}.
\end{equation*}
The corresponding conservation laws are: the mass
$$ M(u)=\int_{\C}|u(z)|^2dL(z), \quad M(v)=\int_{\C}|v(z)|^2dL(z),$$
  the angular momentum
    \begin{equation*}
 P_-(u,v)=\int_{\C}\big(|z|^2-1\big)\big(|u(z)|^2-|v(z)|^2\big)dL(z),
 \end{equation*}
and the magnetic momentum
   \begin{equation*}
 Q_-(u,v)=\int_{\C}z\big(|u(z)|^2-|v(z)|^2\big)dL(z).
 \end{equation*}

\subsection{Functional spaces} In order to state our results we need to define a few spaces. Namely, for $s \geq 0$, we denote by 
$$L^{2,s}=\big\{u\in \mathscr{S}'(\C), \;\<z\>^su \in L^2(\C)\big\}, \quad \<z\>=(1+|z|^2)^{1/2}$$
 the weighted Lebesgue space and we define
 $$L^{2,s}_{\E}=L^{2,s}  \cap {\E}.$$
 It turns out that this latter space coincides with the harmonic Sobolev space. For  $s \geq 0$ we consider    
\begin{equation} \label{def-sobo}
\HH^{s}(\C) = \big\{ u\in \mathscr{S}'(\C),\; {H}^{s/2}u\in L^2(\C)\big\}\cap {\E},
\end{equation}
equipped with the natural norm $ \|u\|_{\HH^s(\C)} =\|H^{s/2} u\|_{L^2(\C)}$. Then, we have $\HH^{s}(\C)  = L^{2,s}_{\E}$ and the following equivalence of norms holds true
\begin{equation} \label{equiNor}
c\|\<z\>^s u\|_{L^2(\C)} \leq  \|u\|_{\HH^s(\C)} \leq C\|\<z\>^s u\|_{L^2(\C)}, \quad \forall\,u \in L^{2,s}_{\E},
\end{equation}
see~\cite[Lemma~C.1]{GGT} for a proof.  \medskip

Similarly, for $\kappa \geq0$, we denote by 
$$\mathcal{X}^{\kappa} =\big\{u\in \mathscr{S}'(\C), \; e^{ \kappa |z|}u \in L^2(\C)\big\} ,$$
and we set 
$$\mathcal{X}^{\kappa}_\mathcal{E}=\big\{u\in \mathscr{S}'(\C), \; e^{ \kappa |z|}u \in L^2(\C)\big\} \cap {\E}.$$
\subsection{Global existence results for the  system \eqref{eq2}}

We first recall the  global well-posedness result  for \eqref{eq2}, which is  contained in~\cite[Theorem~1.1]{Schw-Tho}.

\begin{thm}[Theorem~1.1, \cite{Schw-Tho}]\label{thmCauchy}
For every $(u_0,v_0)\in \mathcal{E} \times \mathcal{E}$, there exists a unique solution $(u,v)\in \mathcal{C}^{\infty}  (\R ,\mathcal{E}\times \mathcal{E})$ to the system  \eqref{eq2}, and this solution depends smoothly on $(u_0,v_0)$. Moreover,
\begin{enumerate}[$(i)$]
\item  for every  $t\in \R$ 
$$ M(u)=\int_{\C}|u(t,z)|^2dL(z)=M(u_0), \quad M(v)=\int_{\C}|v(t,z)|^2dL(z)=M(v_0),$$
and 
 $$\mathcal{H}(u,v)=  \int_{\C}|u(t,z)|^2|v(t,z)|^2dL(z)=\mathcal{H}(u_0,v_0) \,;$$
\item   if $(zu_0,zv_0)\in L^2(\C )\times  L^2(\C )$, then  $\big(zu(t), zv(t)\big)\in L^2(\C)\times  L^2(\C )$ for every $t\in \R$, and 
      \begin{equation*}
 P_-(u,v)=\int_{\C}\big(|z|^2-1\big)\big(|u(t,z)|^2-|v(t,z)|^2\big)dL(z)= P_-(u_0,v_0),
 \end{equation*}
   \begin{equation*}
 Q_-(u,v)=\int_{\C}z\big(|u(t,z)|^2-|v(t,z)|^2\big)dL(z)= Q_-(u_0,v_0)\,;
 \end{equation*}
\item if for some $s>0$, $\big(\langle z\rangle ^s u_0, \<z\rangle ^s v_0\big)\in L^2(\C)  \times  L^2(\C ) $, then  $\big(\langle z\rangle ^su(t), \langle z\rangle ^sv(t)\big) \in L^2(\C) \times  L^2(\C ) $ for every $t\in \R$.
 \end{enumerate}
\end{thm}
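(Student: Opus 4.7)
The plan is to regard \eqref{eq2} as an ODE on the Banach space $\E\times\E$ and to run a standard Picard iteration. The pivotal point is that $\E$ is a reproducing kernel Hilbert space with kernel $K(z,w)=\pi^{-1}e^{z\ov w-|z|^2/2-|w|^2/2}$, so every $u\in\E$ satisfies the pointwise bound $\|u\|_{L^\infty(\C)}\leq \pi^{-1/2}\|u\|_{L^2(\C)}$. Since $\Pi:L^2(\C)\to\E$ is an orthogonal projection, this immediately gives
\begin{equation*}
\|\Pi(|v|^2u)\|_{L^2}\leq \|v\|_{L^\infty}^2\|u\|_{L^2}\leq \pi^{-1}\|v\|_{L^2}^2\|u\|_{L^2},
\end{equation*}
and an analogous bound for differences. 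The vector field $F(u,v)=\bigl(-i\Pi(|v|^2u),\,i\Pi(|u|^2v)\bigr)$ is therefore locally Lipschitz (in fact polynomial and hence $\mathcal{C}^\infty$) on $\E\times\E$, so Cauchy--Lipschitz furnishes a unique maximal solution $(u,v)\in\mathcal{C}^\infty(I,\E\times\E)$ depending smoothly on the data.

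For the conservation laws, I would differentiate in time under the integral sign, using the self-adjointness of $\Pi$ on $L^2$ and the fact that $u(t),v(t)\in\E$ so that $\Pi u=u$ and $\Pi v=v$. For $M(u)$: $\frac{d}{dt}M(u)=2\Re\langle \partial_tu,u\rangle=2\Im\langle \Pi(|v|^2u),u\rangle=2\Im\int_{\C}|v|^2|u|^2dL=0$, and similarly for $M(v)$. For $\mathcal{H}$, writing $\frac{d}{dt}\mathcal{H}=2\Re\int(\ov u\,\partial_tu)|v|^2+2\Re\int(\ov v\,\partial_tv)|u|^2$ and plugging in the equations yields two terms of opposite signs thanks to the sign $\sigma=-1$, hence $\mathcal{H}$ is conserved. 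Globality $I=\R$ then follows because $\|u(t)\|_{L^2},\|v(t)\|_{L^2}$ are conserved, so $\|F(u,v)\|_{\E\times\E}$ stays bounded a priori and no blow-up can occur in finite time.

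For item $(ii)$ I would first propagate the weight $\langle z\rangle$. The key auxiliary fact is that the Bergman-type projection $\Pi$ is bounded on every weighted space $L^{2,s}$ (this can be seen from the explicit kernel $K$ and a standard Schur-test argument, and is available as \cite[Lemma~C.1]{GGT}). Combined with $\|v\|_{L^\infty}\lesssim\|v\|_{L^2}$, this gives
\begin{equation*}
\|\langle z\rangle^s\Pi(|v|^2u)\|_{L^2}\lesssim \|v\|_{L^2}^2\,\|\langle z\rangle^su\|_{L^2}.
\end{equation*}
A Gronwall argument on $\frac{d}{dt}\|\langle z\rangle^s u(t)\|_{L^2}^2$ then yields persistence of the weight and proves item $(iii)$ for every $s\geq 0$. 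In the special case $s=1$, one differentiates $P_-$ and $Q_-$ in time; expanding $\partial_t(|u|^2-|v|^2)=-2\Im(\ov u\Pi(|v|^2u)+\ov v\Pi(|u|^2v))$ and moving $\Pi$ onto the test weight (which lies in $\E$ after multiplying, since $z,\,|z|^2-1$ times an element of $\E$ is still processed by $\Pi$ trivially once paired with elements of $\E$) shows that the resulting real/imaginary parts cancel, giving the conservation of $P_-$ and $Q_-$.

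\textbf{Main obstacle.} The delicate step is the propagation of the weighted norms through the projection $\Pi$: the multiplier $\langle z\rangle^s$ does not commute with $\Pi$, so one cannot simply estimate term by term. The cleanest route is to invoke the equivalence \eqref{equiNor} between $\|\langle z\rangle^s u\|_{L^2}$ and $\|u\|_{\HH^s}$ and use the boundedness of $\Pi$ on $L^{2,s}$; without this, one would have to work directly with the kernel of $\Pi$ and handle the loss of decay coming from the Gaussian cross-term $e^{z\ov w}$, which is precisely why the reference to \cite[Lemma~C.1]{GGT} is essential for closing the Gronwall estimate.
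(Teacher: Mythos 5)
The first thing to note is that the paper does not prove Theorem~\ref{thmCauchy} at all: it is recalled verbatim from \cite[Theorem~1.1]{Schw-Tho}. So the only in-paper benchmark is the closely analogous Proposition~\ref{propExp}, and your scheme coincides with it: Carlen's bound $\|u\|_{L^\infty(\C)}\leq \pi^{-1/2}\|u\|_{L^2(\C)}$ on $\E$ makes the vector field locally Lipschitz (indeed smooth) on $\E\times\E$, Picard gives local well-posedness and smooth dependence, conservation of the $L^2$ norms globalizes, and persistence of weights follows from the boundedness of $\Pi$ on weighted spaces plus Gronwall. This part of your proposal, covering existence, uniqueness, smooth dependence, conservation of $M$ and $\mathcal H$, and item $(iii)$, is correct. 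Two small inaccuracies: the weighted boundedness of $\Pi$ is \cite[Proposition~3.1]{GGT} (estimate \eqref{conti} here), whereas \cite[Lemma~C.1]{GGT} is the norm equivalence \eqref{equiNor}; and the conservation of $\mathcal H$ is not a cancellation of ``two terms of opposite signs thanks to $\sigma=-1$'': each term vanishes separately, since $\langle \Pi F,F\rangle=\|\Pi F\|_{L^2}^2$ is real, and $\mathcal H$, being the Hamiltonian, is conserved for either sign of $\sigma$.

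The genuine gap is item $(ii)$. Your justification for conserving $P_-$ and $Q_-$ — ``moving $\Pi$ onto the test weight, which lies in $\E$ after multiplying'' — does not hold for the weights in question: $\ov z u$ and $(|z|^2-1)u=(z\ov z-1)u$ are \emph{not} in $\E$ (they are not of the form entire function times Gaussian), so $\Pi$ does not act trivially on them and the claimed cancellation is asserted rather than proved. A correct argument either computes directly, using for $u=f e^{-|z|^2/2}\in\E$ the identity $\Pi(\ov z u)=f'\,e^{-|z|^2/2}$ (the annihilation/creation structure) together with an integration by parts, or invokes the Noether-type argument for the magnetic translations $R_\alpha$ and rotations $L_\theta$ with the sign-reversed symplectic structure of \eqref{eq2}; in either route the sign $\sigma=-1$ enters essentially, and it is exactly what selects the \emph{difference} combinations $|u|^2-|v|^2$ (for $\sigma=+1$ the conserved quantities would be the corresponding sums). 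In addition, differentiating $P_-$ and $Q_-$ in time presupposes that $zu(t),zv(t)\in L^2$, i.e.\ item $(iii)$ with $s=1$, so the persistence argument must come first; your Gronwall step does supply this, but the logical order in your write-up should be reversed and the cancellation computation actually carried out.
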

 
 We can also prove polynomial  bounds on the possible growth of Sobolev norms for \eqref{eq2},  we refer to~\cite[Theorem~1.5]{Schw-Tho} for details. \medskip
 
It turns out that equation \eqref{eq2} is also globally well-posed for exponentially localised functions and we are able to obtain a quantitative estimate on the long time behaviour of the solutions as well as a stability result.
\begin{prop}\label{propExp}
Let $\kappa \geq 0$, then the following properties hold true:
\begin{enumerate}[$(i)$]
\item assume that  $( u_0,    v_0)\in \mathcal{X}^{\kappa}_\mathcal{E} \times \mathcal{X}^{\kappa}_\mathcal{E}$, then  the corresponding solution to \eqref{eq2} satisfies $  (u,   v) \in\mathcal{C}^{\infty}  \big(\R , \mathcal{X}^{\kappa}_\mathcal{E} \times \mathcal{X}^{\kappa}_\mathcal{E}\big)$. Moreover,    for every $t\in \R$, 
\begin{equation}\label{mexp}
\begin{aligned} 
&  \| e^{\kappa|z|} u(t)\|_{L^2(\C)}    \leq \| e^{\kappa|z|} u_0\|_{L^2(\C)}e^{c_\kappa  \|  v_0\|^2_{L^2} |t|}  \\[4pt]
&   \| e^{\kappa|z|} v(t)\|_{L^2(\C)}    \leq \| e^{\kappa|z|} v_0\|_{L^2(\C)}e^{c_\kappa  \|  u_0\|^2_{L^2} |t|}  \;,
\end{aligned}
\end{equation}
where the constant $c_\kappa>0$ only depends on $\kappa>0$ (notice that $c_0=0$ by the conservation of the $L^2-$norm);
\item   consider two solutions $(u,   v) \in\mathcal{C}^{\infty}  \big(\R , \mathcal{X}^{\kappa}_\mathcal{E} \times \mathcal{X}^{\kappa}_\mathcal{E}\big)$ and $(\wt{u},  \wt{v}) \in\mathcal{C}^{\infty}  \big(\R , \mathcal{X}^{\kappa}_\mathcal{E} \times \mathcal{X}^{\kappa}_\mathcal{E}\big)$ to \eqref{eq2}. Then, for all $t\in \R$
\begin{multline} \label{mexp2}
  \| e^{\kappa|z|} \big(u(t)-\wt{u}(t)\big)\|^2_{L^2(\C)}  + \| e^{\kappa|z|} \big(v(t)-\wt{v}(t)\big)\|^2_{L^2(\C)}  \\
    \leq\big( \| e^{\kappa|z|} \big(u_0-\wt{u}_0\big)\|^2_{L^2(\C)}+   \| e^{\kappa|z|} \big(v_0-\wt{v}_0\big)\|^2_{L^2(\C)} \big)   e^{c_\kappa  (\|  u_0\|^2_{L^2}+ \|  \wt{u}_0\|^2_{L^2}+\|  v_0\|^2_{L^2}+ \|  \wt{v}_0\|^2_{L^2}) |t|} ,
\end{multline}
where the constant $c_\kappa>0$ only depends on $\kappa>0$.
\end{enumerate}
\end{prop}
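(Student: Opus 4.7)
The plan is to derive a Gronwall-type differential inequality for the weighted $L^2$-norm and close via mass conservation. For part (i), differentiating in time and using $i\partial_t u = \Pi(|v|^2u)$ gives
$$\tfrac{d}{dt}\|e^{\kappa|z|} u(t)\|_{L^2(\C)}^2 = 2\,\Im\int_{\C} e^{2\kappa|z|}\,\bar u(z)\,\Pi(|v|^2 u)(z)\,dL(z) =: 2\,\Im(I).$$
Writing $\Pi$ as the integral operator against the Bargmann-Fock kernel $K(z,w)=\frac{1}{\pi}e^{z\bar w-(|z|^2+|w|^2)/2}$, which satisfies $|K(z,w)|=\frac{1}{\pi}e^{-|z-w|^2/2}$ and $\overline{K(z,w)}=K(w,z)$, I swap the dummy variables in $\bar I$ and identify $I-\bar I$ as a double integral carrying the antisymmetric factor $(e^{2\kappa|z|}-e^{2\kappa|w|})$. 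The mean-value inequality gives $|e^{2\kappa|z|}-e^{2\kappa|w|}|\le 2\kappa|z-w|(e^{2\kappa|z|}+e^{2\kappa|w|})$, and the elementary bound $e^{2\kappa|z|}\le e^{\kappa|z|}e^{\kappa|w|}e^{\kappa|z-w|}$ (following from $|z|\le|w|+|z-w|$, and symmetrically) redistributes the weight symmetrically between the two slots.

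After absorbing the factor $|z-w|e^{\kappa|z-w|}$ into the Gaussian $e^{-|z-w|^2/2}$, obtaining a kernel bounded by $C_\kappa\,e^{-|z-w|^2/4}$, the key additional input is the pointwise estimate $|v(w)|\le \pi^{-1/2}\|v\|_{L^2}$ for $v\in\E$, which follows from the reproducing-kernel identity $v(w)=\int K(w,z)v(z)\,dL(z)$ and Cauchy-Schwarz. Pulling out this $L^\infty$-norm and applying Young's convolution inequality to the remaining Gaussian yields
$$\tfrac{d}{dt}\|e^{\kappa|z|} u\|_{L^2}^2 \le c_\kappa\,\|v\|_{L^2}^2\,\|e^{\kappa|z|} u\|_{L^2}^2.$$
Mass conservation (Theorem~\ref{thmCauchy}) gives $\|v(t)\|_{L^2}=\|v_0\|_{L^2}$, and Gronwall delivers \eqref{mexp}; the symmetric estimate for $v$ is identical. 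The $\mathcal{C}^\infty(\R,\mathcal{X}^\kappa_\E\times\mathcal{X}^\kappa_\E)$ regularity follows from this a priori bound combined with the $\mathcal C^\infty$ flow of Theorem~\ref{thmCauchy} via a standard continuity/bootstrap argument.

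For part (ii), set $U=u-\wt u$ and $V=v-\wt v$. Then
$$i\partial_t U = \Pi(|v|^2 U) + \Pi\bigl((V\bar v + \wt v\,\bar V)\wt u\bigr),$$
and symmetrically for $V$. The first (quadratic) term is treated exactly as in part (i), contributing $c_\kappa\|v\|_{L^2}^2\|e^{\kappa|z|}U\|_{L^2}^2$ to $\tfrac{d}{dt}\|e^{\kappa|z|}U\|_{L^2}^2$. The cross term is only linear in $(U,V)$, so no antisymmetrization is needed: I directly bound its imaginary part using $e^{2\kappa|z|}\le e^{\kappa|z|}e^{\kappa|w|}e^{\kappa|z-w|}$, the pointwise bounds $|\wt u(w)|, |v(w)|\lesssim \|\wt u\|_{L^2},\|v\|_{L^2}$, and Young's inequality on the Gaussian kernel, obtaining a contribution
$$\lesssim C_\kappa\,\|v\|_{L^2}\|\wt u\|_{L^2}\,\|e^{\kappa|z|}U\|_{L^2}\,\|e^{\kappa|z|}V\|_{L^2}.$$
Adding the analogous inequality for $V$, using AM-GM to separate $\|e^{\kappa|z|}U\|_{L^2}^2$ and $\|e^{\kappa|z|}V\|_{L^2}^2$, invoking mass conservation on the right, and applying Gronwall gives \eqref{mexp2}.

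The main obstacle is that $\Pi$ does not commute with multiplication by the weight $e^{\kappa|z|}$, so a naive integration by parts fails. The resolution is to exploit the explicit Gaussian off-diagonal decay $|K(z,w)|=\pi^{-1}e^{-|z-w|^2/2}$: every polynomial or exponential factor in $|z-w|$ produced by redistributing the weight between $z$ and $w$ is absorbed into a slightly weaker Gaussian, after which the $L^2\hookrightarrow L^\infty$ embedding of $\E$ turns the nonlinearity into a constant multiple of conserved $L^2$-norms, and Gronwall closes the argument.
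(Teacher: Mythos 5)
Your argument is, in substance, the paper's own proof: a weighted energy estimate in which the weight is redistributed through the Gaussian off-diagonal decay of the Bargmann kernel, the embedding $\|v\|_{L^\infty}\le \pi^{-1/2}\|v\|_{L^2}$ on $\E$ (Carlen), mass conservation, and Gronwall. The paper merely packages the kernel manipulation as a preliminary lemma, $\|e^{\kappa|z|}\Pi F\|_{L^p}\le C_\kappa\|e^{\kappa|z|}F\|_{L^p}$, proved exactly by your redistribution $e^{\kappa|z|}\le e^{\kappa|z-w|}e^{\kappa|w|}$ and Young's inequality with $\psi(z)=\pi^{-1}e^{\kappa|z|-|z|^2/2}$, and then simply bounds $2\Im\int e^{2\kappa|z|}\bar u\,\Pi(|v|^2u)\,dL$ by Cauchy--Schwarz. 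One step of yours is mis-stated, although it costs nothing: after writing $I$ as a double integral and swapping the dummy variables in $\bar I$, the difference $I-\bar I$ carries the bracket $e^{2\kappa|z|}|v(w)|^2-e^{2\kappa|w|}|v(z)|^2$, not the factor $(e^{2\kappa|z|}-e^{2\kappa|w|})$, because the argument of $|v|^2$ swaps as well. The antisymmetrization is in any case unnecessary: no cancellation is being exploited (the claimed bound allows exponential growth in $t$), and the direct estimate $|I|\le \|v\|_{L^\infty}^2\iint e^{\kappa|z|}|u(z)|\,\pi^{-1}e^{\kappa|z-w|-|z-w|^2/2}\,e^{\kappa|w|}|u(w)|\,dL(w)\,dL(z)\le C_\kappa\|v_0\|^2_{L^2}\|e^{\kappa|z|}u\|^2_{L^2}$ yields the differential inequality at once, which is what the paper does; dropping the antisymmetrization (here and in the quadratic term of part (ii)) leaves your proof intact. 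Your decomposition $|v|^2u-|\wt v|^2\wt u=|v|^2U+(V\bar v+\wt v\bar V)\wt u$ and the ensuing Gronwall argument for \eqref{mexp2} coincide with the paper's. The only other looseness is the passage from the a priori bound to existence and smoothness in $\mathcal{X}^{\kappa}_\mathcal{E}$: the paper runs the fixed point directly in $\mathcal{X}^{\kappa}_\mathcal{E}$ using the trilinear bound $\|e^{\kappa|z|}\Pi(abc)\|_{L^2}\le C_\kappa\|e^{\kappa|z|}a\|_{L^2}\|b\|_{L^2}\|c\|_{L^2}$ and globalizes because the local existence time depends only on the $L^2$ norm; your appeal to a ``standard continuity/bootstrap argument'' should be made concrete along those lines, since the formal differentiation of the weighted norm presupposes that the solution already lies in $\mathcal{X}^{\kappa}_\mathcal{E}$ (or requires a truncated-weight approximation).
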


The estimate \eqref{mexp} is sharp, see~\eqref{sharp-exp} below.

  \subsection{Solitons and multi-solitons}
Using the  invariances induced  by phase rotations and magnetic translations, it is natural to look for   particular solutions  for equation~\eqref{eq2}  of the form
 \begin{equation}\label{trav}
 \big(u(t,z), v(t,z)\big)=\big(e^{-i\lambda t}U(z+\alpha t) e^{\frac{1}{2}(\overline z \alpha - z \overline{\alpha})t},e^{-i \mu t}V(z+\alpha t) e^{\frac{1}{2}(\overline z \alpha - z \overline{\alpha})t} \big),
 \end{equation}
 that we call progressive  or traveling waves. Such solutions do exist, and by~\cite[Theorem~1.6]{Schw-Tho}, the progressive waves in $\mathcal{E}$, when $\alpha \neq 0$, which have a finite number of zeros are given by the initial conditions 
  
 \begin{equation}\label{prog}
\left\{
\begin{aligned}
& U= Ke^{ia} \big(\frac12 \phi_0^\gamma +\frac{\sqrt{3}}2 ie^{i\theta} \phi_{1}^\gamma      \big)   \\
& V= Ke^{i b} \big(\frac12 \phi_0^\gamma -\frac{\sqrt{3}}2i e^{i\theta} \phi_{1}^\gamma      \big),
\end{aligned}
\right.
\end{equation}
with $\gamma \in \C$ and
\begin{equation*}
\phi_n^\gamma(z)  = \frac{1}{\sqrt{\pi n!}} (z-\overline \gamma)^n e^{-\frac{|z|^2}{2}-\frac{|\gamma|^2}{2} + \gamma z}.
\end{equation*} 
with $K \geq 0  $, with $\theta, a,b \in  \R$,  where
 \begin{equation}\label{def-para1}
  \lambda = \frac{K^2}{32\pi}(7+2\sqrt{3}\Im\big(\gamma e^{-i\theta})\big), \quad \mu = \frac{K^2}{32\pi}\big(-7+2\sqrt{3}\Im(\gamma e^{-i\theta})\big),
 \end{equation}
 and with the speed 
  \begin{equation}\label{def-para2}
\alpha = \frac{\sqrt{3}}{32\pi}K^2e^{-i\theta}.
 \end{equation}

It is interesting to notice that any non trivial    traveling wave of the form~\eqref{trav} has growing Sobolev norms. Actually,  if $u(t)= e^{-i\lambda t}R_{\alpha t}U $, then   
 \begin{equation*}  
 \|\<z\>^s u(t)\|_{L^{2}(\C)} =\|\<z\>^s R_{\alpha t}U\|_{L^{2}(\C)}= \|\<z-\alpha t \>^s U\|_{L^{2}(\C)} \sim |\alpha|^s |t|^s  \|U\|_{L^{2}(\C)},
   \end{equation*} 
when $t \longrightarrow \pm \infty$. Moreover, the previous growth of norms is the strongest possible by~\cite[Theorem~1.5]{Schw-Tho}. Similarly, when $t \longrightarrow \pm \infty$,
 \begin{equation} \label{sharp-exp}
 \| e^{\kappa|z|} u(t)\|_{L^{2}(\C)} =\|e^{ \kappa |z-\alpha t |} U\|_{L^{2}(\C)} \sim  e^{ \kappa |\alpha ||t |}  \big\| e^{-\kappa \tau \Re(ze^{-i \theta})}U\big\|_{L^{2}(\C)},
   \end{equation} 
with $\theta=\arg(\alpha)$ and $\tau=\text{sign}(t)$. Thus \eqref{sharp-exp}     shows the sharpness of \eqref{mexp}.

\subsubsection{Existence of multi-solitons}
A natural question is the existence of solutions to \eqref{eq2} which are a finite sum of such traveling  waves. The answer is positive and this is the content of the following result : 

\begin{thm}\label{thm-multi}
Let $n \geq 1$. For $1 \leq j \leq n$, let  $(K_j, a_j, b_j, \theta_j, \gamma_j ) \in \R^*_+ \times \R \times  \R \times \R \times \C$ and consider the parameters $(\lambda_j, \mu_j,\alpha_j) \in \R \times \R \times \C^*$  given by~\eqref{def-para1} and~\eqref{def-para2}. Assume that $\alpha_j \neq \alpha_{\ell}$ for $j \neq \ell$. Denote by 
$$\alpha_\sharp = \min_{j \neq \ell}{|\alpha_j-\alpha_\ell|}.$$
Then, for all $\kappa>0$, there exists a solution $(u,v)\in \mathcal{C}^{\infty}  \big(\R , \mathcal{X}^{\kappa}_\mathcal{E} \times \mathcal{X}^{\kappa}_\mathcal{E}\big)$ to equation~\eqref{eq2} of the form 
 \begin{equation}\label{sol-thm}
\left\{
\begin{aligned}
&u(t,z)=\sum_{j=1}^n e^{-i\lambda_j t}U_j(z+\alpha_j t) e^{\frac{1}{2}(\overline z \alpha_j - z \overline{\alpha_j})t}+r_1(t,z) \\
& v(t,z)=\sum_{j=1}^n   e^{-i \mu_j t}V_j(z+\alpha_j t) e^{\frac{1}{2}(\overline z \alpha_j - z \overline{\alpha_j})t}  +r_2(t,z) ,
\end{aligned}
\right.
\end{equation}
where the $(U_j,V_j)$ take the form \eqref{prog} and where the error terms satisfy : for all 
$$c<\frac 14 $$
and all $m \in \N$, there exists $C_{m,\kappa}>0$ such that for all $t \geq 0$
  \begin{equation} \label{bb}
 \big \|e^{\kappa|z|} (\partial^m_t r_1)(t)  \big\|_{L^2}+  \big \|e^{\kappa|z|} (\partial^m_t r_2)(t)  \big\|_{L^2} \leq C_{m,\kappa} e^{-c \alpha^2_\sharp t^2}.
  \end{equation}
\end{thm}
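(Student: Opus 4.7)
The plan is to follow the classical backward-in-time construction. Let $\Psi_j(t,z) = e^{-i\lambda_j t}U_j(z+\alpha_j t)e^{\frac{1}{2}(\ov z \alpha_j - z \ov{\alpha_j})t}$ and define $\Phi_j$ analogously from $V_j$; set $\Psi = \sum_{j=1}^n \Psi_j$ and $\Phi = \sum_{j=1}^n \Phi_j$. Writing $(u,v) = (\Psi + r_1, \Phi + r_2)$ and using that each $(\Psi_j, \Phi_j)$ is itself a solution of \eqref{eq2}, the system for $(r_1,r_2)$ takes the schematic form
$$i\pa_t r_1 = \Pi(|\Phi|^2 r_1) + \mathcal{N}_1(r_1,r_2) + F_1,\qquad i\pa_t r_2 = -\Pi(|\Psi|^2 r_2) + \mathcal{N}_2(r_1,r_2) + F_2,$$
where $\mathcal{N}_1,\mathcal{N}_2$ collect the cross terms of degree $\geq 1$ in $(r_1,r_2)$, and
$$F_1 = \Pi(|\Phi|^2\Psi) - \sum_{j=1}^n \Pi(|\Phi_j|^2\Psi_j),\qquad F_2 = -\Pi(|\Psi|^2\Phi) + \sum_{j=1}^n \Pi(|\Psi_j|^2\Phi_j),$$
so that $F_1, F_2$ gather only interactions between distinct solitons. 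Since each $\Psi_j(t,\cdot)$, $\Phi_j(t,\cdot)$ is dominated by a polynomial times $e^{-|z+\alpha_j t|^2/2}$, any triple product $\Phi_j \ov{\Phi_k}\Psi_\ell$ with $(j,k,\ell)$ not all equal is bounded pointwise by a Gaussian centered between the soliton trajectories with $L^2$-mass $O(e^{-\alpha_\sharp^2 t^2/4})$ up to polynomial factors. Together with a commutator bound between the exponential weight $e^{\kappa|z|}$ and the Bergman projector $\Pi$ (in the spirit of the proof of Proposition~\ref{propExp}), this yields, for any $c<1/4$,
$$\|e^{\kappa|z|} F_i(t)\|_{L^2(\C)} \leq C_{\kappa,c}\, e^{-c\alpha_\sharp^2 t^2},\qquad i=1,2.$$

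Next, pick a sequence $T_n \to +\infty$ and let $(u_n,v_n) \in \mathcal{C}^\infty(\R, \mathcal{X}^\kappa_\E \times \mathcal{X}^\kappa_\E)$ be the unique solution of \eqref{eq2} with final data $(u_n,v_n)(T_n) = (\Psi(T_n),\Phi(T_n))$, which exists thanks to Theorem~\ref{thmCauchy} and Proposition~\ref{propExp}(i). Set $r_1^n = u_n-\Psi$ and $r_2^n = v_n-\Phi$, so that $r_i^n(T_n)=0$. Differentiating the weighted energy
$$\mathcal{Q}_n(t) = \|e^{\kappa|z|}r_1^n(t)\|^2_{L^2} + \|e^{\kappa|z|}r_2^n(t)\|^2_{L^2}$$
along the equation, using the $t$-uniform $L^\infty$ bounds on $\Psi, \Phi$ (the magnetic translations preserve $L^\infty$), and the weighted-$\Pi$ estimate underlying Proposition~\ref{propExp}, one gets a bootstrap differential inequality
$$\left|\tfrac{d}{dt}\mathcal{Q}_n(t)\right| \leq C_\star e^{-c\alpha_\sharp^2 t^2}\sqrt{\mathcal{Q}_n(t)} + C_\star \mathcal{Q}_n(t),$$
with $C_\star$ independent of $n, t$. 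Integrating backward from $T_n$ with $\mathcal{Q}_n(T_n)=0$ gives
$$\sqrt{\mathcal{Q}_n(t)} \leq \tfrac{C_\star}{2}\int_t^{T_n} e^{-c\alpha_\sharp^2 s^2}\, e^{C_\star(s-t)/2}\, ds,$$
and the Gaussian integrand dominates the exponential Gronwall factor: completing the square, for every $c'<c<1/4$ and $t$ large one obtains the bound $\sqrt{\mathcal{Q}_n(t)} \leq C_{\kappa,c'} e^{-c'\alpha_\sharp^2 t^2}$, uniform in $n$ and $t \in [0,T_n]$; this closes the bootstrap used implicitly to absorb the higher-order terms in $(r_1^n,r_2^n)$.

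To finish, differentiating the equation $m$ times and feeding in the previous bounds, one shows by induction on $m$ that $\partial_t^m r_i^n$ satisfies a Gaussian-in-$t$ estimate of the same form, so that $(r_1^n,r_2^n)$ is uniformly bounded in $\mathcal{C}^m(\R_+, \mathcal{X}^\kappa_\E \times \mathcal{X}^\kappa_\E)$ for every $m$. A standard compactness argument (weighted $L^2$ control, smoothing properties of $\Pi$, Arzel\`a--Ascoli) extracts a subsequential limit $(r_1,r_2) \in \mathcal{C}^\infty(\R, \mathcal{X}^\kappa_\E \times \mathcal{X}^\kappa_\E)$ satisfying \eqref{bb}, and $(u,v)=(\Psi+r_1,\Phi+r_2)$ is then the desired multi-soliton solution of \eqref{eq2}. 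The main technical obstacle is the weighted energy estimate: one must push the exponential weight $e^{\kappa|z|}$ past the Bergman projector $\Pi$ and through the nonlinear interactions while keeping the Gronwall growth at worst linear in $t$, so that it is afterwards swallowed by the Gaussian decay of the sources $F_1,F_2$. Once this is achieved, both the bootstrap closing and the passage to the limit are routine.
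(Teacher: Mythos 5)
Your proposal follows essentially the same route as the paper: approximate solutions obtained by solving backward from final data equal to the sum of the traveling waves at times $T_n \to +\infty$, a weighted energy estimate based on the $e^{\kappa|z|}$-boundedness of $\Pi$ and the Gaussian decay of the inter-soliton interactions (giving any rate $c<1/4$), a backward Gr\"onwall argument, and an induction on time derivatives. The only minor differences in execution are that the paper needs no bootstrap to absorb the terms quadratic and cubic in the error (mass conservation together with Carlen's bound $\|r\|_{L^\infty}\leq C\|r\|_{L^2}$ gives a uniform $L^\infty$ control, so the differential inequality is directly linear in the weighted energy), and it passes to the limit through a Cauchy-sequence estimate plus Montel's theorem rather than a compactness extraction.
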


Notice that thanks to the Carlen inequality~\eqref{hyp} below, the bound~\eqref{bb} implies the following pointwise  estimate : for all $c<1/4$, all $m \in \N$ and all $z\in \C$
    \begin{equation*} 
 \big |(\partial^m_t  r_1)(t,z)  \big|   +    \big|(\partial^m_t  r_2)(t,z)  \big|  \leq C_{m,\kappa} e^{-c\alpha^2_\sharp t^2} e^{-\kappa|z|}.
  \end{equation*}~
  
The construction of multi-solitons for \eqref{eq2} relies on classical arguments, including  backwards in time integration and energy estimates. We refer to~\cite{Martel-Merle, CoteLeCoz, CoteMarMer, Fer2,Faou-Rapha} where these methods were used. The situation here  is very favorable since  in the space~$\E$, any~$L^p$  norm ($p\geq 2$) can be controlled (see \eqref{hyp}), namely
  \begin{equation*}
  \|u\|_{L^\infty(\C)} \leq C \|u\|_{L^2(\C)} , \quad \forall u \in \E.  
  \end{equation*}
In particular, this allows to prove that  the system~\eqref{eq2}  is globally well-posed in $\E$ and to close energy estimates in $\E$.\medskip

  In \eqref{bb} we observe that  the decay depends only on $\alpha_\sharp$ and not on the frequencies $\lambda_j$ (resp.~$\mu_j$) of the traveling waves. This decay is induced by the Gaussian nature of  the traveling waves. Such a phenomenon is in contrast with NLS, where the solitons have an exponential decay and where the speed of convergence depends on the frequencies of the solitons \cite{CoteLeCoz}.    The same rate of decay as in~\eqref{bb} is obtained in~\cite{Fer2} where multi-Gaussian solutions are constructed for the Schr\"odinger equation with logarithmic nonlinearity (logNLS).  Another interesting similarity with the results in~\cite{Fer2}, is that the convergence to the multi-soliton holds in weighted Sobolev spaces (namely in $H^1 \cap \mathcal{F}(H^1)$). In the present case, one can even upgrade to exponential weights, and this is due to the absence of linear part in the equation \eqref{eq2} (see Remark~\ref{rem14} for the case of LLL with a linear part). We refer to \cite{Car-Galla} and references therein for more results on the dynamics of  logNLS.\medskip

The result of Theorem~\ref{thm-multi} actually holds under the weaker assumption that each   traveling wave $(U,V) \in \E \times \E$ of the sum~\eqref{sol-thm}  satisfies a Gaussian bound 
\begin{equation}\label{gb}
|U(z)|+|V(z)| \leq C e^{-c_0 |z|^2},
\end{equation}
for some $C, c_0 >0$, and the proof of the Theorem~\ref{thm-multi} is written using only the assumption~\eqref{gb}. In this latter case, \eqref{bb} is replaced by 
  \begin{equation} \label{cc}
 \big \|e^{\kappa|z|} (\partial^m_t r_1)(t)  \big\|_{L^2}+  \big \|e^{\kappa|z|} (\partial^m_t r_2)(t)  \big\|_{L^2} \leq C_{m,\kappa} e^{-\wt{c_m} \alpha^2_\sharp t^2},
  \end{equation}
for some $\wt{c_m}>0$. However, we do not know if there exist other  traveling waves (with $\alpha \neq 0$) than the ones exhibited in~\eqref{prog} (such traveling waves would then have an infinite number of zeros by~\cite[Theorem~1.6]{Schw-Tho}). \medskip

In the hypotheses of Theorem~\ref{thm-multi}, one can also allow for the case where $\alpha_j=0$ for at most only one  $1\leq j\leq n$. In this case, $(e^{-i\lambda t}U(z) , e^{-i\mu t}V(z))$ is a solution to \eqref{eq2} if and only if  
 \begin{equation} \label{sysM}
\left\{
\begin{aligned}
& \lambda U= \Pi(|V|^2 U)     \\
& \mu V= -\Pi(|U|^2 V). 
\end{aligned}
\right.
\end{equation}
By Theorem~\ref{thmDec}, any solution   $(U,V) \in \E \times \E$  to \eqref{sysM} satisfies the bound \eqref{gb} for all $c_0<1/2$. Examples of solutions of~\eqref{sysM}  are for instance :
\begin{itemize}
\item $\dis (U,V)=  (A_1\phi_{n_1}^{\gamma}, A_2\phi_{n_2}^{\gamma})$, for any $A_1,A_2,\gamma \in \C  $ and $n_1,n_2 \in  \N$, by~\cite[Theorem~1.6]{Schw-Tho}~; \vspace{4pt}
\item $(U,V)= (U,U)$ and $\mu=-\lambda$ where $U \in \E$ is any  solution of $\dis   \lambda U= \Pi(|U|^2 U) $. We refer to~\cite[Appendix~A]{GGT} for explicit examples.
\end{itemize}
 \medskip

By reversibility of the equation~\eqref{eq2}, similar multi-solitons can be constructed in the regime ${t\longrightarrow - \infty}$. Actually, if $(u,v)$ is a solution to \eqref{eq2}, then $(\wt{u}, \wt{v})$ is also a solution where $(\wt{u}, \wt{v})(t) := (v,u)(-t)$. However, the question whether there exists $(r_1,r_2)$ such that \eqref{bb} holds for all $t \in \R$ is left open. \medskip

Since the terms in \eqref{sol-thm} decouple when   $t \longrightarrow +\infty$, it is easy to observe   that the solutions of Theorem~\ref{thm-multi} satisfy
 \begin{equation}\label{form-cons}
\left.
\begin{array}{lll}
&\dis M(u)= M(v)= \sum_{j=1}^n K_j^2, & \dis \mathcal{H}(u,v)=  \frac{11}{64 \pi}  \sum_{j=1}^n K_j^2,  \vspace{5pt} \\
& \dis  P_-(u,v)=\sqrt{3}\sum_{j=1}^n \Im(\gamma_j e^{-i\theta_j})K_j^2, \qquad & \dis Q_-(u,v)=-\frac{\sqrt{3}}2i     \sum_{j=1}^n  e^{-i\theta_j}K_j^2.
\end{array}
\right.
\end{equation}

\begin{rem}\label{rem14}
 We can also construct multi-solitons  for the system 
 \begin{equation}\label{sys-conj}
\left\{
\begin{aligned}
&i\partial_{t}\wt{u}- \delta H\wt{u} =  \Pi (|\wt{v}|^2 \wt{u}), \quad   (t,z)\in \R\times \C,\\
&i\partial_{t}\wt{v}-\delta  H\wt{v}=  -   \Pi (|\wt{u}|^2 \wt{v}),\\
&\wt{u}(0,z)=  u_0(z),\; \wt{v}(0,z)=  v_0(z),
\end{aligned}
\right.
\end{equation}
where $\delta \in \R$ is a   given dispersion parameter. Actually,    the change of unknown $(\wt{u}, \wt{v}) =e^{-i \delta t H}(u,v)$ shows that  the system~\eqref{eq2} is equivalent to~\eqref{sys-conj} (see \cite[Section 1.7.2]{Schw-Tho} for more details). Recall that $e^{i \tau H}= e^{2i\tau }L_{2\tau}$ (which can be directly checked by testing on the complete  family $(\phi_n)_{n\geq 0}$), then Theorem~\ref{thm-multi} enables the  construction of   the following multi-solitons  for \eqref{sys-conj}
 \begin{equation*} 
\left\{
\begin{aligned}
&\wt{u}(t,z)=\sum_{j=1}^n e^{-i(\lambda_j +2\delta)t}      L_{-2\delta t}U_j(z+\alpha_j t) e^{\frac{1}{2}(\overline z \alpha_j - z \overline{\alpha_j})t}+\wt{r_1}(t,z) \\
& \wt{v}(t,z)=\sum_{j=1}^n   e^{-i(\mu_j +2\delta)t}      L_{-2\delta t} V_j(z+\alpha_j t) e^{\frac{1}{2}(\overline z \alpha_j - z \overline{\alpha_j})t} +\wt{r_2}(t,z) ,
\end{aligned}
\right.
\end{equation*}
where for all $s\geq 0$ and all $t \geq 0$
  \begin{equation} \label{conji}
  \|\<z\>^s (\partial^m_t \wt{r_1})(t) \|_{L^2}+  \|\<z\>^s(\partial^m_t \wt{r_2})(t) \|_{L^2} \leq C_{s,m} e^{-c_{s,m} t^2}.
  \end{equation}
  We refer to paragraph~\ref{para-disp} for a proof of \eqref{conji}.
\end{rem}

\subsubsection{A uniqueness result in $\mathcal{X}^{\kappa}_\mathcal{E}$} We are able to prove that the multi-soliton constructed in Theorem~\ref{thm-multi} is actually unique in the class $\mathcal{X}^{\kappa}_\mathcal{E}$, provided that $\kappa>0$ is large enough : 

\begin{thm}\label{thm-uni}
Let $n \geq 1$. For $1 \leq j \leq n$, let  $(K_j, a_j, b_j, \theta_j, \gamma_j ) \in \R^*_+ \times \R \times  \R \times \R \times \C$ and consider the parameters $(\lambda_j, \mu_j,\alpha_j) \in \R \times \R \times \C^*$  given by~\eqref{def-para1} and~\eqref{def-para2}. Assume that $\alpha_j \neq \alpha_{\ell}$ for $j \neq \ell$. Set
 \begin{equation} \label{dela}
\delta=  \frac{\dis  \max_{1 \leq j \leq n}K^2_j}{ \dis  \min_{1 \leq j \leq n}K^2_j} =\frac{\dis  \max_{1 \leq j \leq n}|\alpha_j|}{ \dis  \min_{1 \leq j \leq n}|\alpha_j|}.
  \end{equation}
There exists a universal constant $c_0>0$ such that if  $\kappa>c_0 \delta$ and if $(\wt{u},\wt{v})\in \mathcal{C}   \big(\R , \mathcal{X}^{\kappa}_\mathcal{E} \times \mathcal{X}^{\kappa}_\mathcal{E}\big)$ is a solution to equation~\eqref{eq2} of the form 
 \begin{equation} \label{multiU}
\left\{
\begin{aligned}
&\wt{u}(t,z)=\sum_{j=1}^n e^{-i\lambda_j t}U_j(z+\alpha_j t) e^{\frac{1}{2}(\overline z \alpha_j - z \overline{\alpha_j})t}+\wt{r_1}(t,z) \\
& \wt{v}(t,z)=\sum_{j=1}^n   e^{-i \mu_j t}V_j(z+\alpha_j t) e^{\frac{1}{2}(\overline z \alpha_j - z \overline{\alpha_j})t}  +\wt{r_2}(t,z) ,
\end{aligned}
\right.
\end{equation}
where the $(U_j,V_j)$ take the form \eqref{prog} and where  
  \begin{equation} \label{ass-exp}
 \big \|e^{\kappa|z|}  \wt{r_1}(t)  \big\|_{L^2}+  \big \|e^{\kappa|z|} \wt{r_2}(t)  \big\|_{L^2} \longrightarrow 0, \qquad t \longrightarrow +\infty,
  \end{equation}
then $(\wt{u},\wt{v})\equiv (u,v)$, where $(u,v)$ is given in Theorem~\ref{thm-multi}.
\end{thm}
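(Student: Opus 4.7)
The plan is to compare $(u,v)$ from Theorem~\ref{thm-multi} with the assumed solution $(\tilde u,\tilde v)$ by running a backward-in-time energy estimate in the weighted space $\mathcal{X}^{\kappa}_\mathcal{E}\times\mathcal{X}^{\kappa}_\mathcal{E}$. Setting $w_1 := \tilde u - u = \tilde r_1 - r_1$ and $w_2 := \tilde v - v = \tilde r_2 - r_2$ and subtracting the two copies of \eqref{eq2}, the pair $(w_1,w_2)$ solves the homogeneous linear-in-$w$ system
\begin{align*}
i\partial_t w_1 &= \Pi(|\tilde v|^2 w_1) + \Pi\big(u\overline{\tilde v}\,w_2\big) + \Pi\big(uv\,\overline{w_2}\big),\\
i\partial_t w_2 &= -\Pi(|\tilde u|^2 w_2) - \Pi\big(v\overline{\tilde u}\,w_1\big) - \Pi\big(vu\,\overline{w_1}\big).
\end{align*}
Combining \eqref{bb} (applied to $r_j$) with the assumption \eqref{ass-exp} (applied to $\tilde r_j$), the quantity $\mathcal{N}(t) := \|e^{\kappa|z|}w_1(t)\|_{L^2}^2 + \|e^{\kappa|z|}w_2(t)\|_{L^2}^2$ satisfies $\mathcal{N}(t) \to 0$ as $t \to +\infty$, and the goal is to prove $\mathcal{N} \equiv 0$.

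The first step is a weighted $L^2$-boundedness of the Bergman projection $\Pi$: the kernel $K(z,\zeta)=\pi^{-1}e^{z\overline{\zeta}-(|z|^2+|\zeta|^2)/2}$ satisfies $|K(z,\zeta)|=\pi^{-1}e^{-|z-\zeta|^2/2}$, and combining the triangle inequality $|z|-|\zeta|\leq|z-\zeta|$ with the elementary bound $\kappa s - s^2/2 \leq \kappa^2 - s^2/4$, Young's convolution inequality yields $\|e^{\kappa|z|}\Pi f\|_{L^2} \leq C_\kappa\|e^{\kappa|z|} f\|_{L^2}$. Together with the Carlen inequality $\|g\|_{L^\infty(\C)} \leq \pi^{-1/2}\|g\|_{L^2(\C)}$ on $\mathcal{E}$ and the conservation of the $L^2$-masses, each of the six contributions obtained by differentiating $\mathcal{N}$ is controlled via Cauchy-Schwarz, which leads to the Gronwall-type inequality $\big|\frac{d}{dt}\mathcal{N}(t)\big| \leq C\,\mathcal{N}(t)$, where $C=C(\kappa,M)$ depends only on $\kappa$ and on the (conserved) masses.

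The main obstacle is that integrating this backward from $T>t$ only gives $\mathcal{N}(t) \leq \mathcal{N}(T)\,e^{C(T-t)}$; since \eqref{ass-exp} provides $\mathcal{N}(T)\to 0$ without any rate, one cannot pass to the limit $T\to+\infty$ directly. The strategy is therefore to upgrade \eqref{ass-exp} to the Gaussian-in-time bound $\mathcal{N}(T)\leq Ce^{-c\alpha_\sharp^2 T^2}$, and this is where the quantitative assumption $\kappa > c_0\delta$ enters. Substituting $\tilde u = \sum_j u_j + \tilde r_1$ and $\tilde v = \sum_j v_j + \tilde r_2$ into \eqref{eq2} yields a Duhamel equation for $(\tilde r_1,\tilde r_2)$ with an off-diagonal source coming from the cross interactions of pairs $(u_j,v_\ell)$, $j\neq\ell$; by the same Gaussian product / complete-the-square computation that drives the proof of \eqref{bb}, this source is bounded by $Ce^{-c\alpha_\sharp^2 t^2}$ in the $\mathcal{X}^{\kappa}_\mathcal{E}$-norm. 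A fixed-point argument at $t=+\infty$ in the ball of Gaussian-decaying perturbations then produces a unique solution, which by uniqueness in this Gaussian class must coincide with $(\tilde r_1,\tilde r_2)$, yielding the refined decay. The threshold $c_0\delta$ is precisely what is needed for the contraction to close, reflecting the fact that the $\mathcal{X}^{\kappa}_\mathcal{E}$-operator norm of the linearization scales with $\max_j K_j^2$ while the rate of the Gaussian source is controlled by the slowest relevant time scale $\propto\min_j K_j^2$, the ratio being $\delta$ by \eqref{dela}.

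With the refined bound $\mathcal{N}(T)\leq Ce^{-c\alpha_\sharp^2 T^2}$ in hand, the backward Gronwall estimate becomes $\mathcal{N}(t) \leq Ce^{-c\alpha_\sharp^2 T^2 + C(T-t)}$; letting $T\to+\infty$ the Gaussian in $T^2$ crushes the linear exponential in $T$, so $\mathcal{N}(t)=0$ for every $t\in\R$, i.e.\ $w_1=w_2=0$ and $(\tilde u,\tilde v)\equiv (u,v)$.
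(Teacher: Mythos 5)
Your reduction to the difference system, and your diagnosis that the backward Gr\"onwall estimate cannot close without a quantitative rate for $\mathcal{N}(T)$, are both correct; but the step you invoke to supply that rate is circular. A fixed-point argument in a ball of Gaussian-decaying perturbations produces a solution that is unique \emph{within that class}; it says nothing about $(\wt{r_1},\wt{r_2})$, which by hypothesis \eqref{ass-exp} only tends to $0$ in $\mathcal{X}^{\kappa}$, with no rate at all. Asserting that $(\wt{r_1},\wt{r_2})$ ``must coincide'' with that fixed point presupposes that it already lies in the Gaussian class, which is essentially the statement to be proved: the whole difficulty of the theorem is to exclude errors that vanish arbitrarily slowly (the rigidity remark after the statement shows the a priori alternative is an error bounded below by $Ce^{-c|t|}$, i.e.\ exponential but not Gaussian). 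Moreover, in your sketch the hypothesis $\kappa>c_0\delta$ plays no verifiable role; the scaling heuristic about operator norms $\propto\max_j K_j^2$ versus a time scale $\propto\min_j K_j^2$ does not produce the contraction condition, and indeed no contraction is needed.

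The paper closes the gap without ever upgrading to Gaussian decay. First, from \eqref{ass-exp} alone one deduces an \emph{exponential-in-time} decay of the unweighted error, $\|\wt{r_1}(t)\|_{L^2}^2+\|\wt{r_2}(t)\|_{L^2}^2\leq C_\kappa e^{-\kappa\alpha_{\min}t}$: the spatial weight is traded against the motion of the solitons via $\|e^{-\kappa|z|}X(t)\|_{L^\infty}+\|e^{-\kappa|z|}Y(t)\|_{L^\infty}\leq Ce^{-\kappa\alpha_{\min}t/2}$ (Lemma~\ref{lem-exp}~$(iv)$; this is exactly where $\alpha_j\neq0$ for all $j$ is needed), while $\|e^{\kappa|z|}\wt{r_j}\|_{L^2}$ is only used as a bounded quantity. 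Note that the decay rate $\kappa\alpha_{\min}$ \emph{improves with} $\kappa$. Second, for the difference $\rho_j$ of the two error terms one proves the differential inequality $|\theta'(t)|\leq C_0K_{\max}^2\,\theta(t)$ with a \emph{universal} $C_0$, which requires the explicit large-time bound $G(t)\leq c_0K_{\max}^2$ on the $L^\infty$ norms (using that the waves separate, so $\|X(t)\|_{L^\infty}$ is governed by $K_{\max}$ and not by $\sum_j K_j$). The backward Gr\"onwall then gives $\theta(t)\leq \theta(M)\big(1+e^{C_0K_{\max}^2M}\big)\leq Ce^{-\kappa\alpha_{\min}M}e^{C_0K_{\max}^2M}$, which tends to $0$ as $M\to+\infty$ precisely when $\kappa\alpha_{\min}>C_0K_{\max}^2$, i.e.\ $\kappa>c_0\delta$ by \eqref{def-para2}. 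This is where the threshold genuinely enters, and your argument has no substitute for this mechanism.
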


In particular, if $\kappa>c_0 \delta$, the solutions constructed in Theorem~\ref{thm-multi} do not depend on $\kappa$. The assumption \eqref{ass-exp} is consistent with the result of Theorem~\ref{thm-multi}, but this assumption is quite strong. It would be interesting to relax it by asking only decay in $L^{2,s}_{\E}$ for some $s\geq 0$, but the situation would   more involved in this case. Actually, the assumption \eqref{ass-exp} implies an exponential decay in time of the error term and  as a consequence the interaction terms can quite easily be controlled.  \medskip

Contrarily to the Theorem~\ref{thm-multi}, in the previous result, one needs the assumption  $\alpha_j\neq 0$ for all  $1 \leq j \leq n$.
However, the result of Theorem~\ref{thm-uni} holds true for any traveling waves satisfying the weaker assumption~\eqref{gb}, but in this latter case,   the threshold is
 \begin{equation} \label{mod}
 \wt{\delta}=  \frac{\dis  \max_{1 \leq j \leq n}K^2_j}{ \dis  \min_{1 \leq j \leq n}|\alpha_j|},
 \end{equation}
where $K_j=\|U_j\|_{L^2}= \|V_j\|_{L^2}$. The modification \eqref{mod} comes from the fact that  one does no more necessarily have the relation \eqref{def-para2} for a general traveling wave, but only an inequality $\dis |\alpha_j| \leq \frac{ K_j^2}{2\sqrt{2} \pi}$ (see~\cite[Proposition~1.8]{Schw-Tho}). \medskip

Notice that the conditions \eqref{dela} and \eqref{mod} are consistent with the symmetries of the problem. In particular, the conditions  are invariant by scaling : if $(u,v)$ is a solution to \eqref{eq2},  then for all $A>0$, $(u_A,v_A)$ defined by $\big(u_A(t,z),v_A(t,z)\big)=\big(Au(A^2t,z),Av(A^2t,z)\big)$ is also a solution and under this transformation one has $(K,\alpha) \mapsto (AK, A^2\alpha)$.\medskip

The multi-soliton enjoys a   rigidity property. Consider a multi-soliton of the form \eqref{multiU} where the remainder terms satisfy \eqref{ass-exp} with $\kappa=0$. Then either   $(\wt{u},\wt{v})\equiv (u,v)$, where $(u,v)$ is given in Theorem~\ref{thm-multi} or there exist $C,c>0$ such that for all $t \in \R$
 $$\|\wt{r}_1(t)\|_{L^2(\C)}+ \|\wt{r}_2(t)\|_{L^2(\C)} \geq  C e^{-c |t|}, $$
 see Lemma~\ref{lemri}. In other words, there is only one multi-soliton which enjoys a Gaussian decay in time. A similar property holds true for logNLS \cite{Fer2}.

\subsubsection{Nonlinear superposition principle}  The next result shows that if one starts from a sum of traveling waves which all have the same speed but which are localised far away enough, then one has a good description of the dynamics of the solution to~\eqref{eq2} for long times, depending on the relative distance of the traveling waves.

 \begin{thm}\label{thm-super}
Let $(K, \theta)\in \R^*_+ \times \R$ and set $\dis \alpha = \frac{\sqrt{3}}{32\pi}K^2e^{-i\theta}$. Let $n \geq 1$ and for $1 \leq j \leq n$, let  $(a_j, b_j,  \gamma_j ) \in  \R \times   \R \times \C$ and consider the parameters $(\lambda_j, \mu_j) \in \R \times \R$  given by~\eqref{def-para1}. Assume that $\gamma_j \neq \gamma_{\ell}$ for $j \neq \ell$, and denote by 
$$\eps = \min_{j \neq \ell}{|\gamma_j-\gamma_\ell|}.$$
Consider the  solution $(u,v)\in \mathcal{C}^{\infty}  \big(\R , \mathcal{E} \times  \mathcal{E}\big)$ to equation~\eqref{eq2} such that
 \begin{equation*} 
u_0(z)=\sum_{j=1}^n  U_j(z),    \qquad  v_0(z)=\sum_{j=1}^n    V_j(z)    ,
\end{equation*}
where the $(U_j,V_j)$ take the form \eqref{prog}. Then 
 \begin{equation*} 
\left\{
\begin{aligned}
&u(t,z)=\sum_{j=1}^n e^{-i\lambda_j t}U_j(z+\alpha t) e^{\frac{1}{2}(\overline z \alpha - z \overline{\alpha})t}+r_1(t,z) \\
& v(t,z)=\sum_{j=1}^n   e^{-i \mu_j t}V_j(z+\alpha  t) e^{\frac{1}{2}(\overline z \alpha - z \overline{\alpha})t}  +r_2(t,z) ,
\end{aligned}
\right.
\end{equation*}
 and where the error terms satisfy :  there exist absolute constants $c,C>0$   such that for all $t \in \R$ 
  \begin{equation}\label{boundr}  
  \|    r_1(t)  \|_{L^2}+   \|  r_2(t)   \|_{L^2} \leq C K^2 \sqrt{|t|}e^{-\frac{ \eps^{-2}}4+c n^2K^2 |t|}.
  \end{equation}
\end{thm}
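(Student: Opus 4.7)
The plan is to write the solution as a perturbation of the natural ansatz obtained by letting each Gaussian lump evolve independently as a traveling wave, and then to close an energy estimate on the error using the far-separation of the lumps. Define
\[
\tilde U_j(t,z) := e^{-i\lambda_j t}\, U_j(z+\alpha t)\, e^{\frac12(\overline z\alpha - z\overline\alpha)t}, \qquad \tilde V_j(t,z) := e^{-i\mu_j t}\, V_j(z+\alpha t)\, e^{\frac12(\overline z\alpha - z\overline\alpha)t}.
\]
Since the speed $\alpha$ is common to all lumps, relations \eqref{def-para1}--\eqref{def-para2} ensure that each pair $(\tilde U_j, \tilde V_j)$ is by itself an exact solution of \eqref{eq2}. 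Setting $r_1 := u - \sum_j \tilde U_j$, $r_2 := v - \sum_j \tilde V_j$ (so $r_1(0)=r_2(0)=0$) and substituting the ansatz into \eqref{eq2}, I would use $i\partial_t \tilde U_j = \Pi(|\tilde V_j|^2 \tilde U_j)$ and expand $|\tilde v|^2 \tilde u = \sum_{j,k,\ell} \tilde V_j \overline{\tilde V_k} \tilde U_\ell$; the fully diagonal triples $(j,j,j)$ cancel, leaving
\[
i\partial_t r_1 = \Pi\bigl(|v|^2 u - |\tilde v|^2 \tilde u\bigr) + F_1, \qquad i\partial_t r_2 = -\Pi\bigl(|u|^2 v - |\tilde u|^2 \tilde v\bigr) - F_2,
\]
with $\tilde u := \sum_j \tilde U_j$, $\tilde v := \sum_j \tilde V_j$, and interaction sources $F_1 := \Pi\bigl(\sum{}' \tilde V_j \overline{\tilde V_k} \tilde U_\ell\bigr)$, $F_2 := \Pi\bigl(\sum{}' \tilde U_j \overline{\tilde U_k} \tilde V_\ell\bigr)$, the prime indicating that the fully diagonal triple is removed.

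The central quantitative step is to show that $F_1, F_2$ are small in $L^2$, \emph{uniformly in $t$}. Every $\tilde U_j, \tilde V_j$ carries the same magnetic phase $e^{\frac12(\overline z\alpha - z\overline\alpha)t}$ and is translated by the same vector $\alpha t$; hence in each cubic product $\tilde V_j \overline{\tilde V_k} \tilde U_\ell$ the three magnetic phases multiply out to a single factor of modulus one, and the common translation is an $L^2$-isometry, so that
\[
\|\tilde V_j \overline{\tilde V_k}\, \tilde U_\ell\|_{L^2(\C)} = \|V_j \overline{V_k}\, U_\ell\|_{L^2(\C)} \qquad \text{for every } t\in\R.
\]
Using \eqref{prog}, each of $U_j, V_j$ is a linear combination of $\phi_0^{\gamma_j}$ and $\phi_1^{\gamma_j}$, with pointwise bound $|U_j(z)|+|V_j(z)| \leq CK(1+|z-\overline{\gamma_j}|)\,e^{-|z-\overline{\gamma_j}|^2/2}$. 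Completing the square in a triple of such Gaussians whose centers $\overline{\gamma_j},\overline{\gamma_k},\overline{\gamma_\ell}$ are not all equal produces a Gaussian in $z$ times an overall factor with the $1/4$ of the statement in the exponent; summing over the $O(n^3)$ off-diagonal triples and recalling that $\Pi$ is an $L^2$-contraction gives
\[
\|F_1(t)\|_{L^2(\C)} + \|F_2(t)\|_{L^2(\C)} \leq C\, n^3 K^3\, e^{-\eps^{-2}/4}.
\]

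With this time-independent source bound, the energy step is routine. Testing the equations against $\overline{r_1}$ and $\overline{r_2}$ and taking imaginary parts, the cubic difference $|v|^2 u - |\tilde v|^2 \tilde u$ is $L^2$-controlled by $C(\|u\|_{L^\infty}^2 + \|\tilde v\|_{L^\infty}^2)(\|r_1\|_{L^2}+\|r_2\|_{L^2})$, and likewise for the second equation. The Carlen inequality $\|w\|_{L^\infty(\C)}\leq C\|w\|_{L^2(\C)}$ for $w\in\mathcal E$, combined with mass conservation and the triangle inequality $\|\tilde u\|_{L^2}+\|\tilde v\|_{L^2}\leq CnK$, controls these $L^\infty$ norms by $CnK$. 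Consequently
\[
\frac{d}{dt}\bigl(\|r_1\|_{L^2}^2 + \|r_2\|_{L^2}^2\bigr) \leq C n^2 K^2 \bigl(\|r_1\|_{L^2}^2 + \|r_2\|_{L^2}^2\bigr) + \|F_1\|_{L^2}^2 + \|F_2\|_{L^2}^2,
\]
and Grönwall with vanishing initial data yields $\|r_1(t)\|_{L^2}^2 + \|r_2(t)\|_{L^2}^2 \leq C K^4 |t|\, e^{-\eps^{-2}/2}\, e^{cn^2K^2|t|}$; taking square roots produces \eqref{boundr}, and the time-reversibility of \eqref{eq2} (see the remark following Theorem~\ref{thm-multi}) extends the estimate to $t<0$.

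The main technical obstacle is the Gaussian-overlap calculation in the second paragraph: tracking the exponent in the statement while completing the square in a product of three shifted Gaussians, accommodating the polynomial prefactor coming from $\phi_1^{\gamma_j}$, and summing cleanly over all off-diagonal triples $(j,k,\ell)$. Once this constant-in-time source bound is in place, the remainder of the proof is exactly the same perturbative energy/Grönwall scheme already deployed in the proof of Theorem~\ref{thm-multi}.
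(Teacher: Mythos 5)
Your proposal follows essentially the same route as the paper: the ansatz of exact traveling waves with common speed and exact initial data, the cancellation of the diagonal triples, the Gaussian-overlap bound on the off-diagonal interaction source (the paper packages this as Lemma~\ref{lem-exp}, proved by the same completion of the square you describe, and applies it after factoring out the common magnetic phase and translation, exactly as you do), then Carlen plus mass conservation and a Grönwall argument from $t=0$. The only point to fix is bookkeeping in the last step: with your unweighted splitting the source enters as $\|F_1\|^2+\|F_2\|^2\lesssim n^6K^6e^{-\eps^{-2}/2}$, so Grönwall gives a prefactor $n^3K^3\sqrt{|t|}$ rather than the stated $K^2\sqrt{|t|}$, and your displayed conclusion $\|r_1\|^2+\|r_2\|^2\leq CK^4|t|e^{-\eps^{-2}/2}e^{cn^2K^2|t|}$ does not literally follow from your own differential inequality; the paper instead keeps $\frac{d}{dt}\eta\leq C\eta^{1/2}\big(n^2K^2\eta^{1/2}+K^3e^{-\eps^{-2}/4}\big)$ and absorbs the cross term by Young's inequality with weight $K^2$, which yields exactly \eqref{boundr}.
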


In particular for $|t|\leq  \eps^{-2}/(10c n^2K^2)$, then 
  \begin{equation*}  
  \|  r_1(t)   \|_{L^2}+   \|   r_2(t)   \|_{L^2} \leq C  e^{-\frac{ \eps^{-2}}8}.
  \end{equation*}

The proof of Theorem~\ref{thm-multi} is in the same spirit as the proof of Theorem~\ref{thm-multi} : in the present case, smallness is obtained thanks to the large distance between the waves ($\eps \ll 1$) instead of considering large times as in Theorem~\ref{thm-multi}. This result can be compared with \cite[Theorem 1.10]{Fer1} where a similar phenomenon occurs for the logNLS equation.\medskip

By a slight modification of our analysis, as in Theorem \ref{thm-multi}, one should also be able to obtain bounds  for $(\partial_t^mr_1, \partial_t^mr_2)$ and/or work in $\mathcal{X}^{\kappa}_\E$ spaces, but we do not write the details here.

 \subsection{Unbounded dynamics  for 2D linear harmonic oscillator}
 
 The result of Theorem~\ref{thm-multi} allows us to give new examples of unbounded trajectories to   the 2D linear harmonic oscillator
  \begin{equation}\label{harm} 
\left\{
\begin{aligned}
&i \partial_t \psi-H\psi +V(t,x,y)\psi=0, \qquad (t,x,y) \in \R \times \R^2,\\
&\psi(0,\cdot)=  \psi_0 \in L^2(\R^2).
\end{aligned}
\right.
\end{equation}
Recall the definition \eqref{def-sobo} of the Sobolev space $\HH^\sigma(\C)$. Our   result for the equation~\eqref{harm} reads as follows : 
 
 \begin{thm}\label{thm-lin}
Let $n \geq 1$. For $1 \leq j \leq n$, let  $(K_j, a_j, b_j, \theta_j, \gamma_j ) \in \R^*_+ \times \R \times  \R \times \R \times \C$ and consider the parameters $(\lambda_j, \mu_j,\alpha_j) \in \R \times \R \times \C^*$  given by~\eqref{def-para1} and~\eqref{def-para2}. Assume that $\alpha_j \neq \alpha_{\ell}$ for $j \neq \ell$. Then  there exists a potential $V \in \mathcal{C}^{\infty}(\R \times \R^2; \R)$ such that for all   $\sigma \geq 0$ and all  $k \in \N$
\begin{equation}\label{potendecay}
\lim_{t \to +\infty} \| \partial^k_t V(t)  \|_{\HH^\sigma(\C)} = 0,
\end{equation}
and there exists a solution  $\psi \in \mathcal{C}^{\infty}(\R \times \R^2; \C)$ to the equation \eqref{harm} of the form
\begin{equation*}
\psi(t)= \sum_{j=1}^n e^{-i\lambda_j \ln t}e^{-2i t}  L_{-2  t} R_{\alpha_j \ln t }U_j  +\eta(t),
\end{equation*}
where  $\|\eta(t)\|_{\HH^1(\C)} \longrightarrow 0$, when $t \longrightarrow +\infty$.
\end{thm}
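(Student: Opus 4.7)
\medskip
\noindent\textbf{Plan.} Transport the multi-soliton of Theorem~\ref{thm-multi} into the linear harmonic-oscillator setting via the logarithmic time change $s=\ln t$ and a gauge transformation. Let $(u,v)$ be the multi-soliton of Theorem~\ref{thm-multi} and set
$$\psi_0(t,z):=e^{-itH}u(\ln t,z),\qquad t>0,$$
extended smoothly through $t=0$. The identity $e^{-i\tau H}=e^{-2i\tau}L_{-2\tau}$ on $\E$ of Remark~\ref{rem14} shows that $\psi_0$ takes the form of the sum displayed in the theorem, plus the remainder $e^{-itH}r_1(\ln t)$, which tends to $0$ in $\HH^1(\C)$ as $t\to+\infty$ by~\eqref{bb} and the isometry of $e^{-itH}$ on $\HH^1$.

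A direct computation, using $i\partial_s u=\Pi(|v|^2u)$, the profile equation $\Pi(|V_j|^2U_j)=(\lambda_j+iA_{\alpha_j})U_j$ satisfied by each traveling wave (where $A_\alpha$ denotes the generator of the one-parameter magnetic translation $s\mapsto R_{\alpha s}$), and the intertwining $e^{-itH}A_\alpha=A_{e^{2it}\alpha}e^{-itH}$ valid on $\E$, yields
$$(i\partial_t-H)\psi_0\;=\;\sum_{j=1}^n\Bigl[\frac{\lambda_j}{t}+\frac{i}{t}A_{e^{2it}\alpha_j}\Bigr]\psi_{0,j}(t)\;+\;\mathcal{E}(t),$$
where $\psi_{0,j}(t):=e^{-itH}u_{0,j}(\ln t)$ are the individual transported traveling waves and $\|\mathcal{E}(t)\|_{\HH^1}$ is super-polynomially small in $t$, thanks to the Gaussian separation of the $u_{0,j}$ and the bound~\eqref{bb}. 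Since $A_\alpha$ contains a real first-order principal part $(\mathrm{Re}(\alpha),\mathrm{Im}(\alpha))\cdot\nabla_{x,y}$, its derivative piece cannot be absorbed into any multiplicative real potential; one must instead compensate by a gauge transformation. Fix a smooth partition of unity $\{\chi_j(t,\cdot)\}_{j=1}^n$ with $\chi_j$ supported in a neighborhood of the (rotating) center of $\psi_{0,j}$---these centers separate linearly in $\ln t$, so the supports can be taken pairwise disjoint for $t$ large---and define
$$\phi(t,z):=-\frac{1}{2t}\sum_{j=1}^n\mathrm{Re}\bigl(\bar z\,e^{2it}\alpha_j\bigr)\,\chi_j(t,z).$$
On $\mathrm{supp}(\chi_j)$, $\nabla\phi$ exactly cancels the derivative part of $\tfrac{i}{t}A_{e^{2it}\alpha_j}$; the remaining multiplicative residues, together with $\partial_t\phi+|\nabla\phi|^2$ from the gauge, define a real, smooth potential $V(t,z)$.

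Set $\tilde\psi:=e^{i\phi}\psi_0$; by the above cancellation, $(i\partial_t-H+V)\tilde\psi$ is super-polynomially small in $\HH^1$, hence time-integrable, and a Duhamel correction produces a corrector $\zeta$ with $\|\zeta(t)\|_{\HH^1}\to 0$ such that $\psi:=\tilde\psi+\zeta$ solves~\eqref{harm} exactly. Writing $\psi=\psi_0+\eta$ with $\eta=(e^{i\phi}-1)\psi_0+\zeta$, the bound $|\phi(t,z)|=\mathcal{O}(|z|/t)$ combined with the at most polylogarithmic growth of $\|\<z\>\psi_0\|_{\HH^1}$ gives $\|\eta(t)\|_{\HH^1}\to 0$; the required decay of every $\|\partial_t^k V(t)\|_{\HH^\sigma}$ follows from the explicit form of $V$, supported on a set of polylogarithmic area with overall $1/t$ factor. \textbf{The main obstacle} is verifying, for $n\geq 2$, that the ``seams'' of the partition of unity---where $\nabla\chi_j$ hits the Gaussian tails of neighboring solitons---contribute only super-polynomially small errors; this follows from the super-Gaussian bound~\eqref{bb} together with the linear-in-$\ln t$ separation of soliton centers, provided the $\chi_j$'s are chosen with cutoff annuli of width $\sim\ln t$.
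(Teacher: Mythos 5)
Your route is genuinely different from the paper's, and as written it has real gaps. The paper does not build $V$ by hand: it uses that on $\E$ the LLL nonlinearity coincides with the CR operator, $\pi^2\Pi(|F|^2u)=\mathcal{T}(F,F,u)$, rescales time logarithmically by setting $\wt u(s)=u(e^s)$ and $F(s)=\pi^{-1}e^{s/2}v(e^s)$, so that $i\partial_s\wt u=\mathcal{T}[F]\wt u$, checks the bounds $\|\partial_s^kF(s)\|_{\HH^\sigma}+\|\partial_s^k\wt u(s)\|_{\HH^\sigma}\leq Ce^{cs}$ (a consequence of \eqref{sol-thm}--\eqref{bb}), and then invokes \cite[Proposition~7.1]{Faou-Rapha}, which directly furnishes the real potential $V(t)=\frac{1}{t\ln t}\big|e^{-itH}F(\ln\ln t)\big|^2$, the decay \eqref{potendecay}, and a corrector $r_0(t)\to0$ in $\HH^1$ such that $\psi=e^{-itH}u(\ln t)+r_0$ solves \eqref{harm}. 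The whole point there is that $\mathcal{T}[F]$ is the resonant average of multiplication by $|e^{-i\tau H}F|^2$ along the harmonic flow, so the \emph{projected} nonlinearity can be traded for a genuine multiplicative real potential, the non-resonant oscillations being absorbed into $r_0$; your scheme discards this structure, eliminates the nonlinearity through the profile equations, and then must manufacture the potential with a gauge and moving cutoffs, which is exactly where the analytic difficulty reappears.

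Two concrete gaps. First, the assertion that after choosing $\phi$ ``the remaining multiplicative residues, together with $\partial_t\phi+|\nabla\phi|^2$, define a real, smooth potential'' is not correct as stated: conjugating $-\Delta$ by $e^{i\phi}$ also produces the purely imaginary term $-i(\Delta\phi)\tilde\psi$, which cannot be part of $V$; together with the uncancelled transport terms acting on the Gaussian tails of $\psi_{0,j}$ outside $\{\chi_j\equiv1\}$ and on the other bubbles, it must be shifted to the error, and you must verify that this error is small in $\HH^1$ and time-integrable. Second, and more seriously, the final sentence ``a Duhamel correction produces a corrector $\zeta$ with $\|\zeta(t)\|_{\HH^1}\to0$'' is unproved and is the crux: the propagator of $H-V(t)$ is unitary on $L^2$ but not on $\HH^1$, and since $\|V(t)\|_{L^\infty}\sim|\alpha_{\max}|^2(\ln t)/t$ is not integrable in time, the crude energy estimate gives an $\HH^1$ growth factor of order $e^{C|\alpha_{\max}|^2(\ln t)^2}$, i.e.\ of exactly the same Gaussian-in-$\ln t$ order as the smallness $e^{-c\,\alpha_\sharp^2(\ln t)^2}$ you use for $r_1(\ln t)$ and for the seams, with constants that need not be favorable ($\alpha_{\max}$ can be much larger than $\alpha_\sharp$). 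Closing this step requires a finer argument (for instance a commutator bound of the type $\|[H^{1/2},V(t)]\|_{L^2\to L^2}=O(1/t)$ yielding only polynomial propagator growth), none of which appears in your sketch; this is precisely the approximation argument that the paper outsources to \cite[Proposition~7.1]{Faou-Rapha}. Finally, note that \eqref{potendecay} demands decay of every $\partial_t^kV$ in every $\HH^\sigma$, so the time derivatives of your rotating cutoffs (whose centers move with speed $\sim\ln t$) must also be tracked; this is plausible with your choice of widths but is not checked.
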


In particular, for all $1\leq j \leq n$, 
$$   \|e^{-i\lambda_j \ln t}e^{-2i t}  L_{-2  t} R_{\alpha_j \ln t }U_j  \|_{\HH^1(\C)} =  \| R_{\alpha_j \ln t }U_j  \|_{\HH^1(\C)}\sim c_j \ln t , \quad  t \longrightarrow +\infty,  $$
for some $c_j>0$. The previous term has Gaussian decay and is concentrated near the point $x+iy  \sim -\alpha_j \ln t$. Therefore, $\psi$ is a sum of space-localised bubbles and
$$\|\psi(t)\|_{\HH^1(\C)} \sim \big(\sum_{j=1}^nc_j \big)  \ln t , \quad  t \longrightarrow +\infty.$$

 The result of Theorem~\ref{thm-lin} is a direct application of~\cite[Proposition 7.1]{Faou-Rapha} (see also~\cite[Theorem~1.1]{Faou-Rapha}), using  the solutions constructed in Theorem~\ref{thm-multi}.

\subsection{Analysis in the Bargmann-Fock space and notations} We end this section by recalling a few results and fixing some notations. The harmonic oscillator $H$ is defined by
$$
H = -4\partial_z \partial_{\ov z}+|z|^2=-(\partial^2_x+\partial^2_y)+(x^2+y^2).
$$
Denote by $(\phi_n)_{n \geq 0}$ the family of the special Hermite functions given by 
$$
\varphi_n(z) = \frac{1}{\sqrt{\pi n!}} z^n e^{-\frac{|z|^2}{2}}.
$$
The family $(\phi_n)_{n \geq 0}$ forms  a Hilbertian basis of $\mathcal{E}$ (see \cite[Proposition 2.1]{Zhu}),  and the $\phi_n$ are the eigenfunctions of $H$, namely 
$$H\phi_n=2(n+1)\phi_n, \quad n\geq 0.$$
For $\gamma \in \C$, we define 
\begin{equation*}
\phi_n^\gamma(z) = R_{- \overline \gamma} (\phi_n)(z) = \frac{1}{\sqrt{\pi n!}} (z-\overline \gamma)^n e^{-\frac{|z|^2}{2}-\frac{|\gamma|^2}{2} + \gamma z}.
\end{equation*} 

The   kernel of  $\Pi$, the orthogonal projection on $\mathcal{E}$, is explicitly given by
\begin{equation*}
K(z,\xi)=\sum_{n=0}^{+\infty}\phi_n(z)\ov{\phi_n}(\xi)=\frac{1}{\pi}e^{\ov{\xi}z}e^{-\vert \xi\vert^2/2}e^{-\vert z\vert^2/2}, \quad   (z,\xi)\in \C\times \C,
\end{equation*} 
and therefore we get the formula 
\begin{equation*} 
[\Pi u](z) = \frac{1}{\pi} e^{-\frac{|z|^2}{2}} \int_\mathbb{C} e^{\ov  w z - \frac{|w|^2}{2}} u(w) \,dL(w),
\end{equation*}
where $L$ stands for Lebesgue measure on $\C$. 

%
%
%

We define the enlarged lowest Landau level space as
\begin{equation*} 
\widetilde{\mathcal E}=\Big\{ u(z) = e^{-\frac{|z|^2}{2}} f(z)\,,\;f \; \mbox{entire\ holomorphic}\, \Big\}\cap \mathscr{S}'({\mathbb C})=\Big\{ u\in \mathscr{S}'(\C ), {\pa_{\ov z}}u+\frac	{z}{2}u=0\Big\} \ .
\end{equation*}
By Carlen~\cite{Carlen}, for all~$u \in \widetilde{\mathcal{E}}$  the following hypercontractivity estimates hold true
\begin{equation}  \label{hyp}
\mbox{if \;$1 \leq p \leq q \leq +\infty$,} \qquad\left( \frac{q}{2\pi} \right)^{1/q} \| u \|_{L^q(\C)} \leq \left( \frac{p}{2\pi} \right)^{1/p} \| u \|_{L^p(\C)}.
\end{equation}

In this paper $c,C>0$ denote universal constants the value of which may change from line to line.

\subsection{Plan of the paper} The rest of the article is organized as follows. In Section~\ref{Sect-2} we prove the well-posedness result for exponentially localised initial conditions. Section~\ref{Sect-2} is devoted to technical results, while the next ones contain the proofs of the main theorems.

\section{Well-posedness and stability results}\label{Sect-2}

\subsection{Continuity results for the projector $\Pi$} The next result shows that $\Pi$ is continuous in~$\mathcal{X}^{\kappa}_\mathcal{E}$ spaces.
\begin{lem} Let $s \geq 0$ and $1 \leq p \leq +\infty$, then for all $F \in \mathscr{S}'(\C)$, 
\begin{equation}\label{conti}
\| \<z\>^s  \Pi(F) \|_{L^p} \leq C \| \<z\>^s  F\|_{L^p},
\end{equation} 
and for all $\kappa\geq 0$
\begin{equation}\label{conti2}
\|e^{\kappa|z|}  \Pi(F) \|_{L^p} \leq C_\kappa \|e^{\kappa|z|}   F\|_{L^p}.
\end{equation} 
\end{lem}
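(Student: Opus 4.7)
The plan is to exploit the explicit Gaussian nature of the kernel of $\Pi$. Recall that
$$
[\Pi F](z)=\int_{\C}K(z,\xi)F(\xi)\,dL(\xi), \qquad K(z,\xi)=\frac{1}{\pi}e^{\overline{\xi}z-|\xi|^{2}/2-|z|^{2}/2},
$$
so that $|K(z,\xi)|=\frac{1}{\pi}e^{\Re(\overline{\xi}z)-|\xi|^{2}/2-|z|^{2}/2}=\frac{1}{\pi}e^{-|z-\xi|^{2}/2}$. Therefore
$$
|[\Pi F](z)|\le \frac{1}{\pi}\bigl(g*|F|\bigr)(z), \qquad g(w):=e^{-|w|^{2}/2},
$$
which reduces both estimates to a convolution bound with a Gaussian after absorbing the weight.

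For \eqref{conti}, I would use the Peetre-type inequality $\langle z\rangle^{s}\le C_{s}\langle z-\xi\rangle^{s}\langle \xi\rangle^{s}$ (valid for $s\ge 0$) to obtain
$$
\langle z\rangle^{s}|[\Pi F](z)|\le \frac{C_{s}}{\pi}\int_{\C}\langle z-\xi\rangle^{s}e^{-|z-\xi|^{2}/2}\,\langle \xi\rangle^{s}|F(\xi)|\,dL(\xi)=\frac{C_{s}}{\pi}\bigl(g_{s}*|\langle\cdot\rangle^{s}F|\bigr)(z),
$$
where $g_{s}(w)=\langle w\rangle^{s}e^{-|w|^{2}/2}\in L^{1}(\C)$. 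Young's convolution inequality then gives
$$
\|\langle z\rangle^{s}\Pi(F)\|_{L^{p}}\le \frac{C_{s}}{\pi}\|g_{s}\|_{L^{1}}\|\langle z\rangle^{s}F\|_{L^{p}},
$$
which is the claim.

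For \eqref{conti2}, the triangle inequality gives $e^{\kappa|z|}\le e^{\kappa|z-\xi|}e^{\kappa|\xi|}$, so
$$
e^{\kappa|z|}|[\Pi F](z)|\le \frac{1}{\pi}\int_{\C}e^{\kappa|z-\xi|}e^{-|z-\xi|^{2}/2}\,e^{\kappa|\xi|}|F(\xi)|\,dL(\xi)=\frac{1}{\pi}\bigl(h_{\kappa}*|e^{\kappa|\cdot|}F|\bigr)(z),
$$
with $h_{\kappa}(w):=e^{\kappa|w|-|w|^{2}/2}\in L^{1}(\C)$ (the quadratic term dominates the linear one, so $\|h_{\kappa}\|_{L^{1}}<\infty$ with a bound depending only on $\kappa$). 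Another application of Young's inequality yields \eqref{conti2} with $C_{\kappa}=\frac{1}{\pi}\|h_{\kappa}\|_{L^{1}}$.

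There is no real obstacle here: the only point to be careful about is that the weights are \emph{sub-Gaussian}, so they can always be split across $z=(z-\xi)+\xi$ without losing the integrability of the residual kernel. The whole proof is essentially a kernel estimate followed by Young's inequality; it works for all $1\le p\le\infty$ simultaneously.
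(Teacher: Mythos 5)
Your proof is correct and, for \eqref{conti2}, it is exactly the paper's argument: the kernel bound $|K(z,\xi)|=\frac{1}{\pi}e^{-|z-\xi|^{2}/2}$, the splitting $e^{\kappa|z|}\le e^{\kappa|z-\xi|}e^{\kappa|\xi|}$, and Young's inequality with $e^{\kappa|w|-|w|^{2}/2}\in L^{1}(\C)$. For \eqref{conti} the paper simply cites \cite[Proposition 3.1]{GGT}, whereas you give a self-contained proof by the same kernel--plus--Peetre--plus--Young scheme; that argument is valid, so nothing is missing.
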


\begin{proof}
The bound \eqref{conti} is proved in~\cite[Proposition 3.1]{GGT}. Let us show \eqref{conti2}. For $F \in \mathscr{S}'(\C)$  we  have 
$$\Pi (F)(z)=\frac{e^{-\frac{|z|^2}2}}\pi  \int_\mathbb{C} e^{\ov  w z - \frac{|w|^2}{2}} F(w) \,dL(w),$$
and therefore, using that   $| e^{ - \frac{|z|^2}{2}+\ov  w z - \frac{|w|^2}{2}} |=   e^{-\frac{|z-w|^2}2}$ and $e^{\kappa|z|} \leq e^{\kappa|z-w|} e^{\kappa|w|}$ we get
\begin{equation*}
e^{\kappa|z|} |\Pi (F)(z)| \leq \frac{1}{\pi}   \int_\mathbb{C} e^{\kappa|z-w|-\frac{|z-w|^2}2} |e^{\kappa|w|} F(w)| \,dL(w)=\big(\psi \star (e^{\kappa|\cdot|}|F|)\big)(z),
  \end{equation*}
where $\psi(z)= \frac{1}\pi e^{\kappa|z|-|z|^2/2} \in L^1(\C)$.  Therefore  by the  Young inequality 
$$\| e^{\kappa|z|}  \Pi(F)  \|_{L^p(\C)} \leq \|\psi \|_{L^1(\C)} \| e^{\kappa |z|}  F \|_{L^p(\C)}  \leq C e^{\kappa^2/2} \| e^{\kappa |z|}  F \|_{L^p(\C)} $$
which is \eqref{conti2}.
 \end{proof}

\subsection{Proof of Proposition \ref{propExp}} 

The proof of Proposition~\ref{propExp} follows the lines of the proof of~\cite[Theorem~1.1]{Schw-Tho}. We also refer to~\cite[Section~3]{GGT} for other  well-posedness results for the LLL equation. \medskip

$\bullet$ {\it Proof of the global existence in $\mathcal{X}^{\kappa}_\mathcal{E}$.} By~\eqref{conti2} and \eqref{hyp} we obtain
\begin{eqnarray}\label{est-exp}
\|  e^{\kappa|z|} \Pi \big( a b c\big) \|_{L^2}  &\leq&C_{\kappa} \|e^{\kappa|z|}  a\|_{L^2} \| b\|_{L^\infty}\| c\|_{L^\infty}\nonumber \\
&\leq&C_{\kappa} \|e^{\kappa|z|}  a\|_{L^2} \| b\|_{L^2}\| c\|_{L^2}.
\end{eqnarray}
The estimate \eqref{est-exp} allows for the  construction of a local in time solution with a fixed point argument, and the globalisation is obtained  using that  the time of existence  only depends on the~$L^2$ norm of the solution.
\medskip

$\bullet$ {\it Proof of \eqref{mexp}.} Let $(u_0,v_0) \in  \mathcal{X}^{\kappa}_\mathcal{E} \times \mathcal{X}^{\kappa}_\mathcal{E}$ and consider $(u,v)\in \mathcal{C}^{\infty}  \big(\R , \mathcal{X}^{\kappa}_\mathcal{E} \times \mathcal{X}^{\kappa}_\mathcal{E}\big)$ the corresponding solution  to equation~\eqref{eq2}.  We  compute 
     \begin{eqnarray*} 
  \frac{d}{dt} \int_\C e^{2\kappa |z|}|u|^2dL &=& 2 \Re \int_\C e^{2\kappa |z|}\ov{u} \partial_t udL \\
   &=& 2 \Im \int_\C e^{2\kappa|z|}\ov{u} \, \Pi (|v|^2 u) dL \\
      &\leq & 2   \|e^{\kappa|z|} u\|_{L^2}     \big\|e^{\kappa|z|} \Pi (|v|^2 u)\big\|_{L^2}.  
  \end{eqnarray*}
Next, by \eqref{conti2} we get 
   \begin{eqnarray} 
  \frac{d}{dt} \int_\C e^{2\kappa |z|}|u|^2dL  
       &\leq & C   \|e^{\kappa|z|} u\|_{L^2}     \big\|e^{\kappa|z|} |v|^2 u\big\|_{L^2}  \nonumber\\
          &\leq & C   \|e^{\kappa|z|} u\|^2_{L^2}     \| v\|^2_{L^\infty} \nonumber \\
                  &\leq & C   \|e^{\kappa|z|} u\|^2_{L^2}     \| v\|^2_{L^2}   \nonumber \\
                                    &\leq & C   \|e^{\kappa|z|} u\|^2_{L^2}     \| v_0\|^2_{L^2}\label{control22}   , 
  \end{eqnarray}
where  we used the Carlen inequality \eqref{hyp} and the conservation of the $L^2$-norm. We conclude by integration.\medskip

$\bullet$ {\it Proof of \eqref{mexp2}.} The proof of this inequality is in the same spirit as the previous one. We have
     \begin{eqnarray*} 
i \partial_t(u-\wt{u}) &=&  \Pi\big(|v|^2u-|\wt{v}|^2\wt{u}\big) \\
   &=&  \Pi\big((u-\wt{u})|v|^2+  (v-\wt{v})\wt{u}\ov{v}+\wt{v}(\ov{v}-\ov{\wt{v}})\wt{u} \wt{v} \big).  
  \end{eqnarray*}
Then, with the same arguments as in~\eqref{control22}, we get 
  \begin{multline*} 
  \frac{d}{dt} \int_\C e^{2\kappa |z|}|u-\wt{u}|^2dL  
       \leq  \\
 \begin{aligned}   
 &\leq   C_\kappa   \|e^{\kappa|z|} (u-\wt{u})\|_{L^2}  \big(\|e^{\kappa|z|} (u-\wt{u})\|_{L^2}+  \|e^{\kappa|z|} (v-\wt{v})\|_{L^2}\big)  \big(    \| u\|^2_{L^\infty}  +   \| \wt{u}\|^2_{L^\infty}+   \| v\|^2_{L^\infty}  +   \| \wt{v}\|^2_{L^\infty}\big)\\
&\leq   C_\kappa   \|e^{\kappa|z|} (u-\wt{u})\|_{L^2}  \big(\|e^{\kappa|z|} (u-\wt{u})\|_{L^2}+  \|e^{\kappa|z|} (v-\wt{v})\|_{L^2}\big)  \big(    \| u_0\|^2_{L^2}  +   \| \wt{u}_0\|^2_{L^2}+   \| v_0\|^2_{L^2}  +   \| \wt{v}_0\|^2_{L^2}\big).
 \end{aligned}  
  \end{multline*}    
  Therefore, setting $\theta=   \|e^{\kappa|z|} (u-\wt{u})\|^2_{L^2}+  \|e^{\kappa|z|} (v-\wt{v})\|^2_{L^2}$, we get the bound $\theta'(t) \leq C_\kappa \theta(t)$ and we deduce \eqref{mexp2} by integration.      
  
  \subsection{A rigidity result}
  A direct consequence   of \eqref{mexp2} (with $\kappa=0$) is  the following rigidity result for the system~\eqref{eq2} in $L^2(\C)$ :

 \begin{lem}\label{lemri}  Let $(u, v) \in \mathcal{C}^{\infty} (\R, \E \times \E)$ and $(\wt{u}, \wt{v}) \in \mathcal{C}^{\infty} (\R, \E \times \E)$ be  solutions to \eqref{eq2}. Then   there exists a universal constant $c>0$ such that for all
 $t \in \R$
 \begin{multline*} 
\|u(t)-\wt{u}(t)\|^2_{L^2(\C)}+ \|v(t)-\wt{v}(t)\|^2_{L^2(\C)} \geq \\
\geq \big(\|u_0-\wt{u}_0\|^2_{L^2(\C)}+ \|v_0-\wt{v}_0\|^2_{L^2(\C)} \big) e^{-c   (\|  u_0\|^2_{L^2}+ \|  \wt{u}_0\|^2_{L^2}+\|  v_0\|^2_{L^2}+ \|  \wt{v}_0\|^2_{L^2}) |t|}.
 \end{multline*}
  \end{lem}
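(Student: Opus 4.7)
The plan is to observe that the argument used to prove \eqref{mexp2} actually gives a two-sided differential inequality on the quantity
$$\theta(t) := \|u(t)-\wt{u}(t)\|_{L^2}^2 + \|v(t)-\wt{v}(t)\|_{L^2}^2,$$
and then integrate in both directions with Gronwall. Concretely, I would first set $\kappa=0$ and run the same computation as in the proof of \eqref{mexp2}: writing $i\partial_t(u-\wt u) = \Pi\big((u-\wt u)|v|^2 + (v-\wt v)\wt u \ov v + \wt v(\ov v-\ov{\wt v})\wt u\big)$ (and the analogous identity for $v-\wt v$), multiplying by $\ov{u-\wt u}$, integrating, and using \eqref{conti} together with the Carlen inequality \eqref{hyp} to control all $L^\infty$ norms by $L^2$ norms.

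The key point, which is implicit in the proof of \eqref{mexp2} but must be made explicit here, is that the expression
$$\frac{d}{dt}\int_\C |u-\wt u|^2 dL = 2\,\Im\int_\C \ov{(u-\wt u)}\,\Pi\big((u-\wt u)|v|^2 + (v-\wt v)\wt u \ov v + \wt v(\ov v-\ov{\wt v})\wt u\big)\,dL$$
is estimated in absolute value, not just in one direction. Combined with the conservation of the $L^2$ masses $\|u(t)\|_{L^2}=\|u_0\|_{L^2}$, $\|v(t)\|_{L^2}=\|v_0\|_{L^2}$ and likewise for $(\wt u,\wt v)$ from Theorem~\ref{thmCauchy}, this yields
$$|\theta'(t)| \leq c\bigl(\|u_0\|_{L^2}^2+\|\wt u_0\|_{L^2}^2+\|v_0\|_{L^2}^2+\|\wt v_0\|_{L^2}^2\bigr)\theta(t) =: A\,\theta(t)$$
for some universal constant $c>0$.

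From the two-sided inequality $-A\theta(t) \leq \theta'(t) \leq A\theta(t)$, Gronwall's lemma applied on $[0,t]$ and $[t,0]$ (according to the sign of $t$) gives
$$\theta(0)\,e^{-A|t|} \leq \theta(t) \leq \theta(0)\,e^{A|t|},$$
and the lower bound is precisely the claim of Lemma~\ref{lemri}. Alternatively, and equivalently, one may invoke \eqref{mexp2} with $\kappa=0$ directly: by time-reversibility of \eqref{eq2}, applying that estimate with initial data taken at time $t$ and evolving to $0$ gives the reverse inequality, and mass conservation lets one replace $\|u(t)\|_{L^2},\|v(t)\|_{L^2},\|\wt u(t)\|_{L^2},\|\wt v(t)\|_{L^2}$ in the exponent by their values at $t=0$. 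There is no real obstacle here; the only subtlety is being careful that the differential inequality controlling $\theta$ is genuinely two-sided, which forces the use of absolute values at the step where $\frac{d}{dt}\theta$ is bounded.
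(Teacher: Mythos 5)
Your proposal is correct and matches the paper's argument: the paper derives Lemma~\ref{lemri} precisely as a direct consequence of \eqref{mexp2} with $\kappa=0$, i.e.\ by running the stability estimate backwards in time and using conservation of the $L^2$ masses, which is exactly your alternative route. Your first version, making the two-sided differential inequality $|\theta'(t)|\leq A\,\theta(t)$ explicit and applying Gr\"onwall in both directions, is just an unpacking of that same computation.
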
  
 
 In other words, either  $(u, v) = (\wt{u}, \wt{v})$ or there exist $c,C>0$ such that 
 $$\|u(t)-\wt{u}(t)\|_{L^2(\C)}+ \|v(t)-\wt{v}(t)\|_{L^2(\C)}
\geq Ce^{-c  |t|}.$$
Notice that a similar property holds true for the Schr\"odinger equation with logarithmic nonlinearity, see \cite[Lemma 6.2]{Fer2}.


\section{Preliminary results}

  \subsection{Interactions of traveling waves} The next result shows that the interactions between the traveling waves have a Gaussian decay with respect to their relative distance.
    \begin{lem}\label{lem-exp}
Assume that $U_1,U_2 \in \E$ satisfy for all $z \in \C$
   \begin{equation}\label{B0}
|U_1(z)| \leq C_0 e^{-c_0{|z|^2}}, \quad |U_2(z)| \leq C_0 e^{-c_0{|z|^2}},
      \end{equation}
for some $c_0,C_0>0$.  Then 
\begin{enumerate}[$(i)$]
\item there exists   $C>0$ such that for all  $\alpha_1, \alpha_2 \in \C$
   \begin{equation}\label{B1}
   \|  (R_{\alpha_1} U_1)\, (R_{\alpha_2} U_2)  \|_{L^\infty}  \leq Ce^{-\frac{c_0}2 |\alpha_1-\alpha_2|^2} \,;
      \end{equation}
\item     for all $s\geq 0$ and all $c_1<c_0$, there exists   $C>0$ such that for all  $\alpha_1, \alpha_2 \in \C$
       \begin{equation}\label{B2}
    \| \<z\>^s (R_{\alpha_1} U_1)\, (R_{\alpha_2} U_2)  \|_{L^2}  \leq C\min\big(\<\alpha_1\>^s, \<\alpha_2\>^s\big)e^{-\frac{c_1}2|\alpha_1-\alpha_2|^2} \,;
          \end{equation}
 \item       for all $\kappa >0$ and all $c_1<c_0$    there exists   $C>0$ such that for all  $\alpha_1, \alpha_2 \in \C$
       \begin{equation*} 
    \| e^{\kappa |z|}(R_{\alpha_1} U_1)\, (R_{\alpha_2} U_2)  \|_{L^2}  \leq C\min\big(e^{\kappa |\alpha_1|}, e^{\kappa |\alpha_2|}\big)e^{-\frac{c_1}2|\alpha_1-\alpha_2|^2} ;
          \end{equation*}
           \item            there exists   $C>0$ such that for all  $L>0$ and $\alpha_1 \in \C$
       \begin{equation}\label{B4}
    \| e^{-L|z|}R_{\alpha_1} U_1 \|_{L^{\infty}}  \leq C e^{-L |\alpha_1| /2}+C e^{-c_0 |\alpha_1|^2/4 }.
          \end{equation}
          \end{enumerate}
    \end{lem}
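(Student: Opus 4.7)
The plan is to exploit two simple facts. First, the magnetic translation preserves modulus pointwise: since $\overline{z}\alpha - z\overline{\alpha}$ is purely imaginary, one has $|R_{\alpha}U(z)| = |U(z+\alpha)|$, so by hypothesis \eqref{B0} we get $|R_{\alpha_j}U_j(z)| \leq C_0 e^{-c_0|z+\alpha_j|^2}$. Second, the decoupling identity (parallelogram-type) $|z+\alpha_1|^2 + |z+\alpha_2|^2 \geq \tfrac12 |\alpha_1-\alpha_2|^2$ lets us extract the separation decay from the product of Gaussians. All four bounds stem from combining these two ingredients, with minor case-work for (iv).

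For (i), I would simply multiply the pointwise bounds and apply the decoupling identity to obtain $|R_{\alpha_1}U_1(z) R_{\alpha_2}U_2(z)| \leq C_0^2 e^{-\frac{c_0}{2}|\alpha_1-\alpha_2|^2}$, which is uniform in $z$. For (ii), given $c_1 < c_0$, I would write $c_0 = c_1 + (c_0 - c_1)$ and bound
\begin{equation*}
|R_{\alpha_1}U_1(z) \, R_{\alpha_2}U_2(z)| \leq C_0^2 \, e^{-\frac{c_1}{2}|\alpha_1-\alpha_2|^2} \, e^{-(c_0-c_1)|z+\alpha_1|^2},
\end{equation*}
using the decoupling identity on the $c_1$-part and throwing away the nonnegative term $(c_0-c_1)|z+\alpha_2|^2$. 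Then $\|\<z\>^s \cdot \|_{L^2}$ is handled by the change of variables $w = z+\alpha_1$ together with the elementary inequality $\<w-\alpha_1\>^s \leq C_s \<w\>^s \<\alpha_1\>^s$; the residual Gaussian integral in $w$ converges. Swapping the roles of $\alpha_1, \alpha_2$ yields the $\min$. For (iii), the argument is identical, replacing the polynomial weight by $e^{\kappa|z|}$ and using the reversed triangle inequality $e^{\kappa|z|} \leq e^{\kappa|z+\alpha_j|} e^{\kappa|\alpha_j|}$; the Gaussian absorbs the exponential weight in $|z+\alpha_j|$ since the remaining quadratic factor $e^{-(c_0-c_1)|z+\alpha_j|^2}$ dominates.

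For (iv), I would split according to whether the Gaussian or the exponential dominates. If $|z+\alpha_1| \geq |\alpha_1|/2$, then $e^{-c_0|z+\alpha_1|^2} \leq e^{-c_0|\alpha_1|^2/4}$; otherwise $|z| \geq |\alpha_1| - |z+\alpha_1| > |\alpha_1|/2$ and $e^{-L|z|} \leq e^{-L|\alpha_1|/2}$. Taking the better bound in each region and summing gives \eqref{B4}.

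I do not expect any genuine obstacle: the statement is essentially a packaging of the Gaussian decay of $U_j$ together with the modulus-preserving action of $R_\alpha$. The only minor care is the loss $c_0 \mapsto c_1$ in (ii)--(iii), which is necessary in order to keep a convergent Gaussian integral after extracting the inter-soliton decay; this is why the statement is phrased for $c_1 < c_0$ rather than $c_1 = c_0$.
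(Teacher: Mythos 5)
Your proposal is correct and follows essentially the same route as the paper: the modulus-preserving property of $R_\alpha$ plus the parallelogram-type identity $|z+\alpha_1|^2+|z+\alpha_2|^2=2|z+\tfrac{\alpha_1+\alpha_2}{2}|^2+\tfrac12|\alpha_1-\alpha_2|^2$ for (i)--(iii) (your splitting $c_0=c_1+(c_0-c_1)$ and Peetre-type bound on the weight is the same mechanism as the paper's translation-by-$\alpha_2$ reduction), and the same two-region splitting for (iv).
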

    
      \begin{proof}
$(i)$      First, we observe that we have the relation
      $$|z+\alpha_1|^2+ |z+\alpha_2|^2= 2\big|z+\frac{\alpha_1+\alpha_2}2\big|^2 +\frac12 |\alpha_1-\alpha_2|^2.$$
    Therefore   by \eqref{B0}
      \begin{eqnarray} \label{az}
\big|  (R_{\alpha_1} U_1)(z) (R_{\alpha_2} U_2)(z) \big| &\leq &C e^{-c_0{|z+\alpha_1|^2}-c_0{|z+\alpha_2|^2}   }  \nonumber\\
&=  &C e^{-c_0\frac{|\alpha_1-\alpha_2|^2}2     }e^{- 2c_0|z+\frac{\alpha_1+\alpha_2}2|^2}  , 
   \end{eqnarray}  
hence the estimate \eqref{B1}.  

$(ii)$ Assume for instance $|\alpha_2| \leq |\alpha_1|$.   In order to prove \eqref{B2} we write 
      \begin{eqnarray*} 
    \| \<z\>^s (R_{\alpha_1} U_1)\, (R_{\alpha_2} U_2)  \|_{L^2}  &=& \| \<z-\alpha_2\>^s (R_{\alpha_1-\alpha_2} U_1)\,  U_2  \|_{L^2} \\
    &\leq & C\<\alpha_2\>^s   \| (R_{\alpha_1-\alpha_2} U_1)\,  U_2  \|_{L^2} + C \| (R_{\alpha_1-\alpha_2}  U_1)\,   \<z\>^sU_2  \|_{L^2} .
          \end{eqnarray*} 
   Observe that    $  \<z\>^s|U_2(z)| \leq C_0 e^{-c_1{|z|^2}}$  for all $c_1<c_0$  and  therefore by~\eqref{az} we obtain 
                 \begin{equation*}
              \| \<z\>^s (R_{\alpha_1} U_1)\, (R_{\alpha_2} U_2)  \|_{L^2}  \leq C \<\alpha_2\>^s e^{-\frac{c_1}2|\alpha_1-\alpha_2|^2} ,
                     \end{equation*}   
  which was to prove.
  
  $(iii)$ Similarly, for all $c_1<c_0$, we have
    \begin{eqnarray*} 
    \| e^{\kappa |z|} (R_{\alpha_1} U_1)\, (R_{\alpha_2} U_2)  \|_{L^2}  &=& \| e^{\kappa |z-\alpha_2|} (R_{\alpha_1-\alpha_2} U_1)\,  U_2  \|_{L^2} \\
    &\leq & e^{\kappa |\alpha_2|}    \| (R_{\alpha_1-\alpha_2} U_1)\, (e^{\kappa |z|}  U_2 ) \|_{L^2}    \\
 &\leq &     Ce^{\kappa |\alpha_2|}  e^{-\frac{c_1}2|\alpha_1-\alpha_2|^2}  ,
      \end{eqnarray*} 
      hence the result.
      
        $(iv)$ By hypothesis \eqref{B0}
             \begin{equation}\label{BB}
    | e^{-L|z|}R_{\alpha_1} U_1 |  \leq  C e^{-L|z|-c_0 |z+\alpha_1|^2 }.
          \end{equation}
      Then observe that     
           \begin{equation*}  
L|z|  + c_0|z+\alpha_1|^2 \geq  \left\{
\begin{aligned}
\; L|\alpha_1|/2 &\quad \text{if} \quad  |z|\geq |\alpha_1|/2 \;\\
\;c_0|\alpha_1|^2/4  &\quad \text{if} \quad  |z|\leq |\alpha_1|/2 ,
\end{aligned}
\right.
\end{equation*}
which implies the result by \eqref{BB}.
  \end{proof}
    
    \subsection{Stability of  traveling waves under time derivation} 
   The next lemma shows that the Gaussian decay of the traveling waves is stable under time derivation. 
    
      \begin{lem}\label{lem-X}
Let $c_0\leq 1/2$. Assume that    $U\in \E$ satisfies a Gaussian bound $|U(z)| \le C_0 e^{-c_0 |z|^2}$ and assume that  $T \in \mathcal{C}\big(\R; \E\big)$ takes the form 
$$T(t,z)=e^{-it \lambda}(R_{\alpha t}U)(z),\qquad \forall \, (t, z)\in \R \times \C.$$ 
Then there exists $\wt{U} \in \E$ such that  for all $c_1<\frac{c_0}{1+2c_0}$, there exists $C_1>0$
$${|\wt{U}(z)| \le C_1 e^{-c_1 |z|^2}}$$
and  
$$(\partial_t T)(t,z)=e^{-it \lambda}(R_{\alpha t}\wt{U})(z),\qquad \forall \, (t, z)\in \R \times \C.$$ 
    \end{lem}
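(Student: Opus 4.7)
The plan is first to identify $\wt{U}$ by a direct differentiation, and then to extract the Gaussian decay via the reproducing formula $U=\Pi U$.

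First I would observe that $\{R_{\alpha t}\}_{t\in\R}$ is a genuine one-parameter group: $R_\alpha R_\beta = e^{i\Im(\overline\alpha\beta)}R_{\alpha+\beta}$ and the phase vanishes when $\alpha,\beta$ are scalar multiples of each other. Writing $D:=\pa_s R_{\alpha s}|_{s=0}$ for the infinitesimal generator, a computation using the relation $\pa_{\overline z}U=-\tfrac z2 U$ yields $DU(z)=\alpha\,\pa_z U(z)+\tfrac12(\overline z\alpha-2z\overline\alpha)U(z)$. Setting $\wt{U}:=-i\lambda U+DU$, the chain rule and the semigroup property give $\pa_t T(t,z)=e^{-i\lambda t}(R_{\alpha t}\wt U)(z)$. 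Writing $U=e^{-|z|^2/2}f$ with $f$ entire, so that $\pa_z U=-\tfrac{\overline z}{2}U+e^{-|z|^2/2}f'$, the expression for $\wt U$ simplifies to
\begin{equation*}
\wt U(z)=(-i\lambda-z\overline\alpha)\,U(z)+\alpha\,e^{-|z|^2/2}f'(z).
\end{equation*}
The membership $\wt U\in\wt{\E}$ is automatic since $R_\alpha$ preserves $\wt{\E}$ (a direct check on $\pa_{\overline z}(R_\alpha u)+\tfrac{z}{2}R_\alpha u$), hence so does its generator.

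The polynomial term $(-i\lambda-z\overline\alpha)U(z)$ is bounded by $C(1+|z|)e^{-c_0|z|^2}$ and is harmless. The main content of the lemma is therefore the estimate on $e^{-|z|^2/2}|f'(z)|$, and here I would avoid a Cauchy estimate and instead invoke the reproducing identity $U=\Pi U$. Unwinding the kernel gives $f(z)=\tfrac 1\pi \int_\C e^{\overline w z-|w|^2/2}U(w)\,dL(w)$, and the Gaussian decay of $U$ legitimates differentiation under the integral to yield
\begin{equation*}
f'(z)=\frac{1}{\pi}\int_\C \overline w\, e^{\overline w z-|w|^2/2}U(w)\,dL(w).
\end{equation*}

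Inserting $|U(w)|\le C_0 e^{-c_0|w|^2}$ and setting $a:=\tfrac 12+c_0$, the identity $-a|w|^2+\Re(\overline w z)=-a|w-\tfrac{z}{2a}|^2+\tfrac{|z|^2}{4a}$ gives
\begin{equation*}
|f'(z)|\le \frac{C_0}{\pi}e^{|z|^2/(4a)}\int_\C |w|\,e^{-a|w-z/(2a)|^2}\,dL(w)\le C(1+|z|)\,e^{|z|^2/(4a)},
\end{equation*}
where the last inequality comes from the translation $w\mapsto w-z/(2a)$ together with $|w|\le|w-z/(2a)|+|z|/(2a)$. Multiplying by $e^{-|z|^2/2}$ converts the exponent into $-\tfrac12+\tfrac{1}{4a}=-\tfrac{c_0}{1+2c_0}$, and hence
\begin{equation*}
|\wt U(z)|\le C(1+|z|)\,e^{-\frac{c_0}{1+2c_0}|z|^2}.
\end{equation*}
The polynomial factor is absorbed at the cost of any strict loss, yielding the claimed bound for every $c_1<c_0/(1+2c_0)$, and $\wt U\in\E$ follows since $\wt U\in\wt\E$ and is in $L^2$ by the Gaussian bound. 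The only delicate bookkeeping is tracking the ratio $c_0/(1+2c_0)$ that emerges naturally from completing the square with $a=\tfrac 12+c_0$; once the reproducing formula is in place, the rest is a routine Gaussian computation.
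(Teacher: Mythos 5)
Your proof is correct and follows essentially the same route as the paper: both identify the same $\wt U$ (you via the group generator, the paper by direct differentiation of the explicit form of $T$), and both obtain the Gaussian bound by differentiating the reproducing identity $U=\Pi U$ and estimating the resulting Gaussian integral, which is exactly where the constant $\frac{c_0}{1+2c_0}$ emerges. The only cosmetic difference is that you complete the square exactly and then absorb a polynomial prefactor, whereas the paper establishes the equivalent quadratic inequality $\tfrac12|z-w|^2+c_0|w|^2\ge\eps|z-w|^2+c_1|z|^2$ via a discriminant computation.
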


\begin{proof} We set $U(z)=f(z) e^{-\frac{|z|^2}2}$, thus
$$
T(t,z)= e^{-i\lambda t}U(z+\alpha t) e^{\frac{1}{2}(\overline z \alpha  - z \overline{\alpha })t}=e^{-i\lambda t}f(z+\alpha t) e^{ - z \overline{\alpha }t-\frac{|z|^2}2-\frac{|\alpha |^2t^2}2}.
$$
A direct computation gives $ (\partial_t T)(t,z)=e^{-i\lambda t}{\wt U}(z+\alpha t) e^{\frac{1}{2}(\overline z \alpha  - z \overline{\alpha })t}$ where 
$${\wt U}(z)=\big(-i \lambda f(z) +\alpha \partial_z f(z) -\ov{\alpha} zf(z) \big)e^{-\frac{|z|^2}2}.$$
Let $c_1<\frac{c_0}{1+2c_0}$. It remains to check  that $\wt{U}$ satisfies a Gaussian bound with constant $c_1$. Since $c_1<c_0$, it is clear that    $|zU(z)| \le C e^{-c_1 |z|^2}$, and let us prove  that $|\partial_z f(z) | \e^{-\frac{|z|^2}2} \leq C \e^{-c_1{|z|^2}}$. Observe that 
$$(\partial_z f)(z)  \e^{-\frac{|z|^2}2}=\big( \partial_z-\frac{\ov{z}}2  \big)U(z)+\ov{z}U(z),$$
hence it is enough to show that $\big|\big( \partial_z-\frac{\ov{z}}2  \big)U(z)\big| \e^{-\frac{|z|^2}2} \leq C \e^{-c_1{|z|^2}}$. 
Writing 
$$U(z)=\Pi(U)(z)=\frac{e^{-\frac{|z|^2}2}}\pi  \int_\mathbb{C} e^{\ov  w z - \frac{|w|^2}{2}} U(w) \,dL(w),$$
we obtain
$$\big(\partial_z -\frac{\ov z}2\big)U(z)=\frac{e^{-\frac{|z|^2}2}}\pi  \int_\mathbb{C} (\ov w-\ov z)e^{\ov  w z - \frac{|w|^2}{2}} U(w) \,dL(w).$$
This in turn implies
\begin{equation}\label{impl}
  \Big|\big(\partial_z -\frac{\ov z}2\big)U(z)   \Big| \leq  C  \big\| e^{c_0  |z|^2 }U\big\|_{L^{\infty}} \int_\mathbb{C} |z-w| e^{-\frac{|z-w|^2}2}   e^{-c_0{|w|^2}}  \,dL(w).
    \end{equation}
  
  Now let us show that there exists $\eps >0$ such that for all $w, z \in \C$
  \begin{equation}\label{poly}
  \frac12|z-w|^2+c_0 |w|^2 \geq \eps |z-w|^2+c_1 |z|^2.
      \end{equation}
  We can assume that $w, z \in \R$ and by homogeneity we can reduce to the case $w=1$. Define the polynomial 
    \begin{eqnarray*}
P(z) &=&  \frac12 (z-1)^2+c_0  - \eps (z-1)^2-c_1 z^2 \\
   &= & (\frac12 -c_1-\eps)z^2-(1-2\eps) z +  (\frac12 +c_0-\eps).
  \end{eqnarray*}
  The discriminant of $P$ is $\Delta= 2(c_1-c_0)(1-2\eps)+4c_0 c_1$ which is negative for $\eps >0$ small enough, since $c_1<\frac{c_0}{1+2c_0}$. As a consequence, $P\geq 0$ for $\eps>0$ small enough, which implies \eqref{poly}. From~\eqref{impl} and \eqref{poly} we deduce that 
  \begin{eqnarray*}
  \Big|\big(\partial_z -\frac{\ov z}2\big)U(z)   \Big| 
  &\leq & C  \big\| e^{c_0  |z|^2 }U\big\|_{L^{\infty}} e^{-c_1 |z|^2}    \int_\mathbb{C} |z-w| e^{-\eps {|z-w|^2}}   \,dL(w) \\
   &\leq & C e^{-c_1 |z|^2} ,
  \end{eqnarray*}
which concludes the proof.
    \end{proof}

\section{Proof of Theorem~\ref{thm-multi}}

 We recall the system
 \begin{equation}\label{eq1} 
\left\{
\begin{aligned}
&i\partial_{t}u= \Pi (|v|^2 u), \quad   (t,z)\in \R\times \C,\\
&i\partial_{t}v=  -\Pi (|u|^2 v),\\
&u(0,\cdot)=  u_0 \in\E,\; v(0,\cdot)=  v_0 \in\E.
\end{aligned}
\right.
\end{equation}
Assume that for each $1\leq j \leq n$, $(U_j,V_j)\in  \E \times \E$ is a traveling wave solution to \eqref{eq1} in the sense~\eqref{trav}, and that there exist $c_0,C>0$ such that 
\begin{equation}\label{gb1}
|U_j(z)|+|V_j(z)| \leq C e^{-c_0 |z|^2}.
\end{equation}
One can have $\alpha_j=0$ for at most one $1\leq j\leq n$, and in this case, \eqref{gb1} is automatically satisfied for any $c_0<1/2$, by Theorem~\ref{thmDec}.  We denote by $K_j=\|U_j\|_{L^2}= \|V_j\|_{L^2}$ (notice that one always has   $\|U_j\|_{L^2}= \|V_j\|_{L^2}$  for a traveling wave, see~\cite[Proposition~1.8]{Schw-Tho}), and we set
\begin{equation} \label{travj}
\begin{aligned} 
& X_j(t,z)= e^{-i\lambda_j t}U_j(z+\alpha_j t) e^{\frac{1}{2}(\overline z \alpha_j - z \overline{\alpha_j})t}\\[4pt]
&   Y_j(t,z)= e^{-i\mu_j t}V_j(z+\alpha_j t) e^{\frac{1}{2}(\overline z \alpha_j - z \overline{\alpha_j})t}  \;,
\end{aligned}
\end{equation}
 and  
 \begin{equation}\label{decomp}
 u=\sum_{j=1}^n X_j+r_1:=X+r_1,\qquad v=\sum_{j=1}^n Y_j+r_2:=Y+r_2,
 \end{equation}
 a solution of \eqref{eq1}.\medskip
 
 For $M>0$, let $(u^M, v^M) \in \mathcal{C}^{\infty}(\R , \E \times \E)$ be the solution to \eqref{eq1} such that $\big(r_1^M(M), r_2^M(M)\big)=(0,0)$. By Proposition~\ref{propExp}, we also have $(u^M, v^M) \in \mathcal{C}^{\infty}(\R ,  \mathcal{X}^{\kappa}_\mathcal{E} \times \mathcal{X}^{\kappa}_\mathcal{E})$ for all $\kappa\geq 0$, and hence
 $$(r_1^M, r_2^M) \in \mathcal{C}^{\infty}(\R ,  \mathcal{X}^{\kappa}_\mathcal{E} \times \mathcal{X}^{\kappa}_\mathcal{E}).$$
 
\subsection{The nonlinear analysis}
The next result shows that the remainder term has an explicit  Gaussian decay, with uniform constants with respect to $M>0$.

  \begin{lem}\label{lem-borne}
  Let $\kappa\geq 0$ and $c_1<\frac{c_0}2$. There exists a constant $C>0$ such that for all  $M>0$  and all $0<t \leq M$
    $$\big\|e^{\kappa |z|} r^M_1(t)\big\|_{L^2}+\big\|e^{\kappa |z|} r^M_2(t)\big\|_{L^2}  \leq Ce^{-{c_1} \alpha^2_{\sharp}t^2}.$$
    \end{lem}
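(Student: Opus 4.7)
I would substitute the decomposition \eqref{decomp} into \eqref{eq1} and, using that each $(X_j, Y_j)$ is already a solution of \eqref{eq1}, derive the evolution equations for the remainders:
\[ i\partial_t r_1^M = \Pi(G_1) + \Pi(N_1), \qquad i\partial_t r_2^M = -\Pi(G_2) - \Pi(N_2), \]
with data $r_1^M(M) = r_2^M(M) = 0$. Here $G_1 = |Y|^2 X - \sum_j |Y_j|^2 X_j$ expands as a sum of trilinear monomials $Y_j\overline{Y_k}X_l$ in which at least two of $(j,k,l)$ are distinct, while $N_1 = |Y|^2 r_1^M + 2\Re(\overline{Y} r_2^M)(X+r_1^M) + |r_2^M|^2(X+r_1^M)$ gathers every contribution involving at least one remainder (and analogously for $G_2, N_2$). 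By Proposition~\ref{propExp}, $(r_1^M, r_2^M)\in\mathcal{C}^\infty(\R,\mathcal{X}^\kappa_\mathcal{E}\times\mathcal{X}^\kappa_\mathcal{E})$, so every manipulation below is justified.

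\medskip

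To control the source I would use Lemma~\ref{lem-exp}: because two of the three indices in any monomial of $G_i$ are distinct, two of the Gaussian profiles are centred at points $-\alpha_j t$ and $-\alpha_k t$ with $|\alpha_j-\alpha_k|\geq\alpha_\sharp$; completing the square as in~\eqref{az} and absorbing the third factor in $L^\infty$ yields a Gaussian factor $e^{-c\alpha_\sharp^2 t^2}$ with $c$ worst-case equal to $c_0/2$ (reached by the trilinear $Y_j\overline{Y_k}X_l$ with $j\neq k$). Lemma~\ref{lem-exp}(iii) then handles the $e^{\kappa|z|}$ weight at the cost of a linear-in-$t$ exponential factor that the Gaussian easily absorbs, giving for every $c_1^\ast < c_0/2$,
\[ \|e^{\kappa|z|}G_i(t)\|_{L^2} \leq C_{c_1^\ast} e^{-c_1^\ast\alpha_\sharp^2 t^2}, \qquad i=1,2. \]
For the nonlinear part I would exploit mass conservation for \eqref{eq1}: $\|u^M(t)\|_{L^2} = \|X(M)\|_{L^2}$ and $\|v^M(t)\|_{L^2} = \|Y(M)\|_{L^2}$ are bounded uniformly in $M$ and $t$ by $\sum_j K_j$. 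By Carlen's inequality~\eqref{hyp}, the $L^\infty$ norms of $X, Y, u^M, v^M$, and hence of $r_1^M, r_2^M$, are uniformly bounded. Combined with the continuity \eqref{conti2} of $\Pi$ in $\mathcal{X}^\kappa_\mathcal{E}$, every monomial of $N_i$---linear, quadratic, or cubic in the remainders---is then controlled by $C(\|e^{\kappa|z|}r_1^M\|_{L^2}+\|e^{\kappa|z|}r_2^M\|_{L^2})$ with $C$ uniform in $M$ and $t$.

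\medskip

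Setting $\Theta(t) = \|e^{\kappa|z|}r_1^M(t)\|_{L^2}^2 + \|e^{\kappa|z|}r_2^M(t)\|_{L^2}^2$, the same energy computation as in \eqref{control22}, together with the previous two bounds and the AM-GM inequality, yields
\[ |\Theta'(t)| \leq C\Theta(t) + C e^{-2c_1^\ast\alpha_\sharp^2 t^2}, \]
with $C$ depending only on $\kappa$ and on the $L^2$ masses. The main obstacle is the backward integration from $\Theta(M)=0$: a naive Gronwall argument produces the unacceptable factor $e^{C(M-t)}$. The resolution is that the Gaussian decay of the source dominates the exponential Gronwall growth. Indeed, from $(e^{Ct}\Theta(t))' \geq -C e^{Ct - 2c_1^\ast\alpha_\sharp^2 t^2}$ and $\Theta(M)=0$ one gets
\[ \Theta(t) \leq C\int_t^{M} e^{C(s-t) - 2c_1^\ast\alpha_\sharp^2 s^2}\,ds \leq C\int_t^{\infty} e^{C(s-t) - 2c_1^\ast\alpha_\sharp^2 s^2}\,ds, \]
and completing the square in the integrand shows this is bounded by $C e^{-2c_1\alpha_\sharp^2 t^2}$ uniformly in $M$ for any $c_1 < c_1^\ast$. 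Taking square roots and noting that $c_1^\ast < c_0/2$ was arbitrary gives the announced bound for every $c_1 < c_0/2$.
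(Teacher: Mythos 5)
Your proposal is correct and follows essentially the same route as the paper: the same splitting of the equation for $(r_1^M,r_2^M)$ into a pure-soliton source plus remainder-dependent terms, Gaussian interaction bounds from Lemma~\ref{lem-exp} with the $e^{\kappa|z|}$ weight via \eqref{conti2}, uniform $L^2$/$L^\infty$ control from mass conservation and Carlen's inequality, and the resulting differential inequality integrated backwards from $t=M$. Your explicit integrating-factor computation simply reproves the backward Gr\"onwall estimate that the paper quotes as Lemma~\ref{lem-Gro}, so there is no substantive difference.
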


  \begin{proof}
 Fix $M>0$. In the sequel, we write $r_1=r^M_1$, $r_2=r^M_2$  and we denote  by 
 $$\eta(t):=\|e^{\kappa |z|} r_1(t)\|^2_{L^2}+\|e^{\kappa |z|} r_2(t)\|^2_{L^2}.$$ We stress that all the constants $C,c_0,c_1,c_2>0$ below will be independent of $M>0$. \medskip

 \underline{Step 1 : A first $L^2$-bound.}  To begin with, let us prove that 
  \begin{equation} \label{bornER}
   \|r^M_1(t)\|_{L^2}  \leq 2 \sum_{j=1}^n  K_j \qquad  \|r^M_2(t)\|_{L^2}  \leq 2 \sum_{j=1}^n  K_j.
  \end{equation}
 By the conservation of the $L^2$ norm for $(u^M,v^M)$ and the triangle inequality, for all $t\in \R$
\begin{eqnarray*}
 \|r^M_1(t)\|_{L^2} &\leq& \|u^M(t)\|_{L^2}+ \sum_{j=1}^n \|X_j(t)\|_{L^2} \nonumber  \\
 &\leq& \|u^M(M)\|_{L^2}+ \sum_{j=1}^n \|X_j(t)\|_{L^2} \nonumber  \\
 &\leq& \sum_{j=1}^n \big(\|X_j(M)\|_{L^2}+ \|X_j(t)\|_{L^2}\big)\nonumber  \\
 &\leq &2 \sum_{j=1}^n  K_j,
  \end{eqnarray*}
 uniformly  with respect to $t\in \R$ and $M>0$.\medskip

\underline{Step 2 : A differential inequality.} In this paragraph we show that, for all $c_1<\frac{c_0}2$,  there exists a   constant $C>0$ independent of $M>0$ such that for all $0<t \leq M$
   \begin{equation}\label{equadiff}
\big|   \frac{d}{dt} \eta(t)\big|  \leq C \eta(t)+ C    e^{-2{c_1} \alpha^2_{\sharp}t^2}.
  \end{equation}
By \eqref{decomp}, the relation $i\partial_{t}u= \Pi (|v|^2 u)$ reads
   \begin{equation}\label{eqer}
 i\partial_{t}r_1= \Pi \big(|Y+r_2|^2 (X+r_1)\big)- i\partial_{t}X:= \sum_{j=0}^3q_j,
  \end{equation}
where
\begin{equation}  \label{exp-q}
\begin{aligned}
q_0&:=  \Pi \big(|Y|^2 X\big)- i\partial_{t}X \\
q_1&:=  \Pi \big(|Y|^2 r_1\big) + \Pi \big(\ov{Y}X r_2\big) + \Pi \big({Y}X \ov{r_2}\big)\\
q_2&:=  \Pi \big( X|r_2|^2\big)+\Pi \big( \ov{Y}r_1 r_2\big)+\Pi \big( {Y}r_1 \ov{r_2}\big)\\
q_3&:= \Pi \big(|r_2|^2 r_1\big).
\end{aligned}
\end{equation}

  Next we compute 
     \begin{eqnarray} 
\Big|  \frac{d}{dt} \int_\C e^{2\kappa|z|}|r_1|^2dL \Big| &=&\Big| 2 \Re \int_\C e^{2\kappa  |z|}\ov{r_1} \partial_t r_1dL \Big|\nonumber \\
   &=& \Big|2 \Im \int_\C e^{2\kappa  |z|}\ov{r_1}  \big(\sum_{j=0}^3q_j\big) dL \Big|\nonumber\\
      &\leq & C   \|e^{\kappa  |z|} r_1\|_{L^2}   \sum_{j=0}^3   \|e^{\kappa  |z|} q_j\|_{L^2}  , \label{sim}
  \end{eqnarray}
and we now  have to estimate each term $\|e^{\kappa  |z|}q_j\|_{L^2} $. 
\medskip

$\bullet$ Control of $\|e^{\kappa |z|}q_0\|_{L^2} $.  For all $1 \leq j \leq n$, $ i\partial_{t}X_j=  \Pi \big(|Y_j|^2 X_j\big) $, thus
$$q_0=\sum_{\substack{1 \leq j, k, \ell  \leq n \\ (j,k, \ell) \neq (j,j,j) }} \Pi \big(Y_j \ov{Y_k}X_{\ell} \big).   $$
Assume for instance that $j\neq k$, so that $\alpha_j \neq \alpha_k$. Then by \eqref{conti2} 
   \begin{eqnarray*} 
\| e^{\kappa  |z|}\Pi \big(Y_j \ov{Y_k}X_{\ell} \big) \|_{L^2}&\leq &C\| e^{\kappa |z|} Y_j {Y_k}X_{\ell}  \|_{L^2} \\
&= &C \| e^{\kappa |z|} (R_{\alpha_j t}V_j ) (R_{\alpha_k t}V_k)  (R_{\alpha_\ell t}U_\ell)  \|_{L^2}\\
&= &C \| e^{\kappa  |z-\alpha_\ell t|}  (R_{(\alpha_j-\alpha_\ell )t}V_j ) (R_{\alpha_k-\alpha_\ell )t}V_k)  U_\ell  \|_{L^2} \\
&\leq &C e^{\kappa |\alpha_\ell |t}  \|(R_{(\alpha_j-\alpha_\ell )t}V_j ) (R_{\alpha_k-\alpha_\ell )t}V_k)\|_{L^{\infty}}  \|e^{\kappa |z |} U_\ell  \|_{L^2} .
  \end{eqnarray*}
  Next by~\eqref{B1}, for all $c_1<\frac{c_0}2$ we have
     \begin{eqnarray*} 
\| e^{\kappa  |z|}\Pi \big(Y_j \ov{Y_k}X_{\ell} \big) \|_{L^2}
&\leq & C e^{\kappa |\alpha_\ell |t-\frac{c_0}2|\alpha_j-\alpha_k|^2t^2}\| e^{\kappa |z |} U_\ell  \|_{L^2} \\
&\leq & C   e^{-{c_1}|\alpha_j-\alpha_k|^2t^2} .
  \end{eqnarray*}
The other terms are treated similarly.  As a consequence, for all $c_1<\frac{c_0}2$  
   \begin{equation}\label{control-q0}
   \|e^{\kappa |z|}q_0\|_{L^2}  \leq C    e^{-{c_1} \alpha^2_{\sharp}t^2}.
  \end{equation}~
  
$\bullet$ Control of $\|e^{\kappa |z|}q_1\|_{L^2} $. From~\eqref{conti2} we have
   \begin{eqnarray*} 
\| e^{\kappa |z|} q_1 \|_{L^2}&\leq &C\| e^{\kappa |z|}  |Y|^2 r_1     \|_{L^2}    +C\| e^{\kappa |z|}\ov{Y}X r_2 \|_{L^2}   \\
&\leq &    C\|  Y     \|^2_{L^\infty}   \| e^{\kappa |z|}   r_1     \|_{L^2}    + C \|  X     \|_{L^\infty}  \|  Y     \|_{L^\infty}  \| e^{\kappa |z|}   r_2     \|_{L^2}  \\
&\leq &   C\big(  \| e^{\kappa |z|}   r_1     \|_{L^2}    +    \| e^{\kappa |z|}   r_2     \|_{L^2} \big) \\
&\leq &C \eta^{1/2} .
  \end{eqnarray*}~

$\bullet$ Control of $\|e^{\kappa |z|}q_2\|_{L^2} $. The estimation of this contribution is in the same spirit as the previous one. Firstly,
   \begin{equation} \label{prev}
\| e^{\kappa |z|} q_2 \|_{L^2} \leq  C  \big( \|  Y     \|_{L^\infty}  \|  r_1     \|_{L^\infty} + \|  X     \|_{L^\infty}  \|  r_2     \|_{L^\infty}  \big)  \| e^{\kappa |z|}   r_2     \|_{L^2}.
  \end{equation}~

Then by the Carlen estimate \eqref{hyp} and by \eqref{bornER}, 
   \begin{equation*} 
    \|  r_1     \|_{L^\infty}  \leq C \|  r_1     \|_{L^2}  \leq C,
      \end{equation*}
 and $ \|  r_2        \|_{L^2}  \leq C$ as well. Thus from \eqref{prev} we deduce 
   \begin{equation*} 
\| e^{\kappa |z|} q_2 \|_{L^2} \leq   C \eta^{1/2} .
  \end{equation*}

   $\bullet$ Control of $\|e^{\kappa |z|}q_3\|_{L^2} $. Similarly,
   \begin{equation*} 
\| e^{\kappa |z|} q_3 \|_{L^2} \leq  C \|  r_2     \|^2_{L^\infty}  \| e^{\kappa |z|}   r_1     \|_{L^2}  \leq C  \eta^{1/2}  .
  \end{equation*}~

Putting all the previous estimates together, from~\eqref{sim} we obtain 
   \begin{equation*} 
\Big|   \frac{d}{dt} \|e^{\kappa |z|}r_1(t)\|^2_{L^2}\Big|  \leq C\eta^{1/2}(t) \big(     \eta^{1/2}(t)+ Ce^{-{c_1} \alpha^2_{\sharp}t^2}\big)  \leq  C \eta(t)+ Ce^{-2{c_1} \alpha^2_{\sharp}t^2}.
  \end{equation*}
The estimate for $\dis \Big|   \frac{d}{dt} \|e^{\kappa |z|}r_2(t)\|^2_{L^2}\Big| $ is similar, hence we get \eqref{equadiff}.
  
  \medskip
\underline{Step 3 : Backward  Gr\"onwall.}  Now,  by integrating   \eqref{equadiff} on $[t, M]$ we get  that for all $0<t<M$ 
  $$\eta(t) \leq  C \int_{t}^{+\infty} e^{-2{c_1} \alpha^2_{\sharp}\sigma^2}d \sigma + C \int_{t}^M \eta(\sigma) d \sigma\leq   C   e^{-2{c_1} \alpha^2_{\sharp}t^2} +C \int_{t}^M \eta(\sigma) d \sigma. $$
   By the backward Gr\"onwall inequality (Lemma~\ref{lem-Gro}) this implies that for all $0<t<M$ 
    \begin{eqnarray*}
    \eta(t) &\leq &    C    e^{-2{c_1} \alpha^2_{\sharp}t^2}    +C \int_{t}^M  e^{-2{c_1} \alpha^2_{\sharp}\sigma^2}  \exp  \big(\int_t^\sigma C d\tau\big)        d \sigma \\
    &\leq &    C   e^{-2{c_1} \alpha^2_{\sharp}t^2}   +C \int_{t}^M  e^{-2{c_1} \alpha^2_{\sharp}\sigma^2+C \sigma}       d \sigma\\
        &\leq &    C   e^{-2{c_2} \alpha^2_{\sharp}t^2}  ,
      \end{eqnarray*}
  for any $c_2 <c_1$, and where the previous constant $C>0$ does not depend on $M>0$, which was the claim.
    \end{proof}

    We now prove that    for all $T>0$, the sequence $\big( r^M_1 , r^M_2 \big)_{M \geq 0}$ is a Cauchy sequence in the space $\mathcal{C} \big([0,T];   \mathcal{X}^{\kappa}_\mathcal{E} \times \mathcal{X}^{\kappa}_\mathcal{E})$ :
    
      \begin{lem}\label{lem-cauchy}
Let $\kappa\geq 0$. For all $c_1<\frac{c_0}2$,  there exists a constant $C>0$ such that for all  $0<N<M$ and  all $0<t \leq N$
  $$\big\|e^{\kappa |z|}(r^M_1-r^N_1)(t)\big\|_{L^2}+\big\|e^{\kappa |z|}(r^M_2-r^N_2)(t)\big\|_{L^2} \leq Ce^{-{c_1} \alpha^2_{\sharp}N^2} .$$
    \end{lem}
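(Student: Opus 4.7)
The strategy is to mimic the proof of Lemma \ref{lem-borne}, with the key simplification that the source term $q_0$ cancels in the difference. Set
$$\rho_1 := r^M_1 - r^N_1, \qquad \rho_2 := r^M_2 - r^N_2, \qquad \theta(t) := \|e^{\kappa|z|}\rho_1(t)\|^2_{L^2} + \|e^{\kappa|z|}\rho_2(t)\|^2_{L^2}.$$
Since $r^N_1(N) = r^N_2(N) = 0$ by construction, we have $\rho_1(N) = r^M_1(N)$ and $\rho_2(N) = r^M_2(N)$, so Lemma \ref{lem-borne} applied at time $t = N \leq M$ yields the boundary estimate $\theta(N) \leq C e^{-2c_1\alpha_\sharp^2 N^2}$ for any $c_1 < c_0/2$.

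Next I would derive an equation for $\rho_1$. Using $u^M = X + r^M_1$, $u^N = X + r^N_1$ and similarly for $v^M, v^N$, one has $u^M - u^N = \rho_1$ and $v^M - v^N = \rho_2$; the traveling wave source terms $i\partial_t X$ and $\Pi(|Y|^2 X)$ cancel between the two equations. A direct expansion gives
$$i\partial_t \rho_1 = \Pi\bigl(|v^M|^2 \rho_1\bigr) + \Pi\bigl(\ov{v^M}\, u^N \rho_2\bigr) + \Pi\bigl(v^N u^N \ov{\rho_2}\bigr),$$
and a symmetric identity for $i\partial_t \rho_2$. Performing the energy computation as in \eqref{sim}--\eqref{control-q0}, applying the continuity bound \eqref{conti2} and controlling each occurrence of $u^M, u^N, v^M, v^N$ in $L^\infty$ by its $L^2$-norm via Carlen's inequality \eqref{hyp} (and hence by the conserved $L^2$-norms of the initial data, using the uniform bound \eqref{bornER}), one obtains a constant $C>0$ independent of $M,N$ such that
$$|\theta'(t)| \leq C\, \theta(t), \qquad 0 < t \leq N.$$

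Finally, a backward Grönwall argument on $[t,N]$ gives $\theta(t) \leq \theta(N) e^{C(N-t)} \leq C e^{CN - 2c_1\alpha_\sharp^2 N^2}$, and since the quadratic term dominates the linear one for $N$ large (while the estimate is trivial for bounded $N$, by adjusting $C$), we conclude that for any $c_1' < c_1 < c_0/2$,
$$\theta(t) \leq C e^{-2c_1'\alpha_\sharp^2 N^2}, \qquad 0 < t \leq N,$$
which is the desired bound (after relabeling $c_1' \to c_1$). The only subtle point is ensuring that no term in the difference equation involves a source (which would worsen the decay rate); this is guaranteed precisely by the fact that both $r^M$ and $r^N$ are remainders with respect to the same ansatz $(X,Y)$, so the nonlinearity produces only terms at least linear in $(\rho_1, \rho_2)$.
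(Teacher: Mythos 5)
Your proposal is correct and follows essentially the same route as the paper: you exploit the cancellation of the source term in the difference equation, obtain $|\theta'(t)|\leq C\theta(t)$ with $C$ uniform in $M,N$ via the Carlen inequality and the uniform $L^2$ bounds, use Lemma~\ref{lem-borne} at $t=N$ (where $r^N_j(N)=0$) for the boundary value, and close with a backward Gr\"onwall argument, absorbing the $e^{CN}$ loss by slightly decreasing $c_1$. Writing the difference equation directly in terms of $u^M,u^N,v^M,v^N$ instead of the $q_j^M-q_j^N$ decomposition is only a cosmetic variation.
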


  \begin{proof}
 By \eqref{eqer}, 
    \begin{equation*} 
 i\partial_{t}(r^M_1-r^N_1) = \sum_{j=1}^3(q^M_j-q^N_j),
  \end{equation*}
  and we observe that the term $q_0$ does not depend on $N$ or $M$, thus $q_0^M=q_0^N$. We 
 compute 
     \begin{eqnarray*} 
\Big|  \frac{d}{dt} \int_\C e^{2 \kappa |z|}|r^M_1-r^N_1|^2dL \Big| &=&\Big| 2 \Re \int_\C e^{2 \kappa|z|}\ov{(r^M_1-r^N_1)} \partial_t (r^M_1-r^N_1)dL \Big|\\
      &\leq & C   \big\|e^{\kappa |z|}(r^M_1-r^N_1)\big\|_{L^2}   \sum_{j=1}^3   \|e^{\kappa |z|}(q^M_j-q^N_j)\|_{L^2}.
  \end{eqnarray*}
   Denote by 
  $$\theta(t)= \big\|e^{\kappa |z|}(r^M_1-r^N_1)(t)\big\|^2_{L^2}+\big\|e^{\kappa |z|}(r^M_2-r^N_2)(t)\big\|^2_{L^2},$$
   then we can prove that $\theta$ satisfies   the inequation
    \begin{equation} \label{ineq1}
  \big|  \frac{d}{dt} \theta(t) \big|   \leq C \theta(t),
      \end{equation}
where $C>0$ does not depend on $N,M>0$. To do  this, we can proceed as in the proof of Lemma~\ref{lem-borne}~: the estimates are the same, simply using 
   $$ \|  r^M_1     \|_{L^\infty}+ \|  r^N_1     \|_{L^\infty}+  \|  r^M_2     \|_{L^\infty}+ \|  r^N_2     \|_{L^\infty}  \leq C.$$
Next by Lemma~\ref{lem-borne}, we have, for any $c_1<\frac{c_0}2$
 $$\theta(N) =\big\|e^{\kappa |z|}r^M_1(N)\big\|^2_{L^2}+\big\|e^{\kappa |z|}r^M_2(N)\big\|^2_{L^2}  \leq Ce^{-2{c_1} \alpha^2_{\sharp}N^2}.$$
  By integration of~\eqref{ineq1} on $[t, N]$ we deduce that for all $0\leq t\leq N$
  $$  \theta(t) \leq \theta(N) +C\int_t^N  \theta(\sigma) d\sigma  \leq   Ce^{-2{c_1} \alpha^2_{\sharp}N^2}+C\int_t^N  \theta(\sigma) d\sigma. $$
  Therefore, by Lemma~\ref{lem-Gro}, for all $0\leq t\leq N$
    $$  \theta(t) \leq Ce^{-2{c_1} \alpha^2_{\sharp}N^2} +Ce^{-2{c_1} \alpha^2_{\sharp}N^2} \int_t^N e^{C \sigma } d\sigma \leq Ce^{-2{c_2} \alpha^2_{\sharp}N^2},$$
  for any $c_2<c_1$,  which was the claim.
    \end{proof}
    
    \subsection{Conclusion of the proof of Theorem \ref{thm-multi}}

For $\kappa>0$, we denote by 
$$\mathcal{X}^{\kappa}=\big\{u\in \mathscr{S}'(\C), \; e^{ \kappa |z|}u \in L^2(\C)\big\}.$$

 By Lemma~\ref{lem-cauchy},  for all $T>0$, the sequence $\big( r^M_1 , r^M_2 \big)_{M \geq 0}$ is a Cauchy sequence in the space $\mathcal{C} \big([0,T];   \mathcal{X}^{\kappa}  \times \mathcal{X}^{\kappa} )$, hence it converges in $\mathcal{C} \big([0,T];   \mathcal{X}^{\kappa}  \times \mathcal{X}^{\kappa} )$. By Lemma~\ref{lem-borne}, its limit satisfies the bound
    $$\|e^{\kappa |z|}r_1(t)\|_{L^2}+\|e^{\kappa |z|}r_2(t)\|_{L^2}  \leq Ce^{-{c_1} \alpha^2_{\sharp}t^2},$$
for any $c_1<\frac{c_0}2$  and  for all $t \geq 0$. \medskip
    
    Now let us prove that for all $t\in [0,T]$ we have $( r_1(t) , r_2(t) \big) \in   {\E}  \times  {\E}$, so that we will deduce that $( r_1 , r_2)  \in \mathcal{C} \big([0,T];   \mathcal{X}^{\kappa}_\E  \times \mathcal{X}^{\kappa}_\E )$. Fix $t\in [0,T]$. In the next lines, we do not mention the dependence on~$t$. For $j=1,2$,  write $r_j^M(z)=f_j^M(z) \e^{-|z|^2/2}$, where~$f_j^M$ is entire. By the Carlen inequality~\eqref{hyp}, for all $z\in \C$ and $M\geq 1$
$$|f_j^M(z) \e^{-|z|^2/2}  | \leq \|r_j^M\|_{L^{\infty}(\C)} \leq C \|r_j^M\|_{L^{2}(\C)} \leq C.$$
Therefore, for all $K>0$ and $M\geq 1$, we get 
$$|f_j^M(z)| \leq C_K, \quad |z|\leq K.$$
By the Montel theorem, there exists an entire function $f_j$ such that, when $M\longrightarrow +\infty$, up to a subsequence  $f^M_j \longrightarrow f_j$, uniformly on any compact of $\C$, and by uniqueness of the limit we have $r_j(z)=f_j(z)\e^{-|z|^2/2} \in \mathcal E $. \medskip

    To complete the proof of Theorem~\ref{thm-multi}, it remains to show that \eqref{bb} holds for all $k \in \N$. We proceed by induction on $k\geq 0$. The case $k=0$ has just been proven. Let $k\geq 0$ such that  
       \begin{equation}\label{bornej}
        \|e^{\kappa |z|}(\partial^j_t r_1)(t)\|_{L^2}+\|e^{\kappa |z|}(\partial^j_t r_2)(t)\|_{L^2}  \leq Ce^{-{c_1} \alpha^2_{\sharp}t^2},
          \end{equation}
        holds true for all $0 \leq j \leq k$, and where the constant $c_1<1/4$ can be chosen arbitrarily close to $1/4$. Then by \eqref{eqer}, 
           \begin{equation*} 
 i\partial^{k+1}_{t}r_1=  \sum_{j=0}^3\partial^k_t q_j.
  \end{equation*}
        Using the Leibniz rule, we observe that $\partial^k_t q_j$ is a trilinear term in $\dis (\partial^j_t r_\ell)_{0 \leq j \leq k}$,  $\dis (\partial^j_t X_\ell)_{0 \leq j \leq k}$, and $\dis (\partial^j_t Y_\ell)_{0 \leq j \leq k}$.  We write
                       \begin{equation*} 
       \big \|e^{\kappa |z|}(\partial^{k+1}_t r_1)(t)\big\|_{L^2}   \leq            \big\|e^{\kappa |z|}(\partial^{k}_t q_0)(t)\big\|_{L^2} +   \sum_{j=1}^3           \big\|e^{\kappa |z|}(\partial^{k}_t q_j)(t)\big\|_{L^2} .
    \end{equation*}
       To bound the first term, we can use  Lemma~\ref{lem-X} repeatedly with $c_0<1/2$ arbitrarily close to $1/2$, hence for all $c_1<1/2$ we get $  \big\|e^{\kappa |z|}(\partial^{k}_t q_0)(t)\big\|_{L^2}  \leq Ce^{-{c_1} \alpha^2_{\sharp}t^2}$. To bound the other terms, we use~\eqref{bornej} and Lemma~\ref{lem-X}, which implies  $\big\|e^{\kappa |z|}(\partial^{k+1}_t r_1)(t)\big\|_{L^2}     \leq Ce^{-{c_1} \alpha^2_{\sharp}t^2}$. With the same arguments we obtain   $\big\|e^{\kappa |z|}(\partial^{k+1}_t r_2)(t)\big\|_{L^2}     \leq Ce^{-{c_1} \alpha^2_{\sharp}t^2}$.\medskip
    
By the same manner,   when one of the traveling waves satisfies the bound~\eqref{gb1} for some $c_0<1/2$, one can establish \eqref{cc}. In this latter case, the constant $\wt{c_k}>0$ giving the rate of the Gaussian decay may depend on $k\in \N$. \medskip
    
    The relations~\eqref{form-cons} are obtained by plugging the expressions~\eqref{sol-thm} in the conservation laws and using Lemma~\ref{lem-exp}, together with the values given in \cite[equation~(1.15)]{Schw-Tho}.

    \subsection{Proof of the bound \eqref{conji}}\label{para-disp}

    By the change of unknown  $(\wt{u}, \wt{v}) =e^{-i \delta t H}(u,v)$, we have $(\wt{r_1}, \wt{r_2}) =e^{-i \delta t H}(r_1,r_2)$. For $s\geq 0$,
    $$  \|\<z\>^s (\partial^k_t \wt{r_1}) \|_{L^2}= \|\<z\>^s \partial^k_t ( e^{-i \delta tH}  {r_1}  )  \|_{L^2},  $$
    and by the Leibniz rule, we are reduced  to bound terms of the form $\|\<z\>^s H^j e^{-i \delta tH} (\partial^{\ell}_t     {r_1}  )  \|_{L^2} $ for $0 \leq j, \ell \leq k$. By~\eqref{equiNor}, 
              \begin{eqnarray*}
 \|\<z\>^s H^j (\partial^{\ell}_t     {r_1}  )  \|_{L^2} &\leq & C  \| H^j e^{-i \delta tH} (\partial^{\ell}_t     {r_1}  )  \|_{\HH^s}  \\
 &\leq & C  \|  \partial^{\ell}_t     {r_1}    \|_{\HH^{s+2j}} \\
  &\leq & C  \|\<z\>^{s+2j} \partial^{\ell}_t     {r_1}    \|_{L^2} \\
    &\leq & C  e^{-\wt{c_\ell} t^2},
  \end{eqnarray*} 
   where in the last line we used \eqref{cc}.
   
    
        \section{Proof of Theorem \ref{thm-uni}} Consider $(\wt{u}, \wt{v}) =( X+\wt{r_1}, Y+\wt{r_2})\in \mathcal{C}   \big(\R , \mathcal{X}^{\kappa}_\mathcal{E} \times \mathcal{X}^{\kappa}_\mathcal{E}\big)$ a multi-soliton as given in Theorem~\ref{thm-uni}. We stress that for all $j \neq \ell$, we have $\alpha_j \neq \alpha_{\ell}$ and that for all $1 \leq j \leq n$, $\alpha_j \neq 0$.\medskip
        
        Similarly to \eqref{travj}, we assume that
        \begin{equation*}  
\begin{aligned} 
&X(t,z)=  \sum_{j=1}^n e^{-i\lambda_j t}U_j(z+\alpha_j t) e^{\frac{1}{2}(\overline z \alpha_j - z \overline{\alpha_j})t}\\[4pt]
&Y(t,z) = \sum_{j=1}^n e^{-i\mu_j t}V_j(z+\alpha_j t) e^{\frac{1}{2}(\overline z \alpha_j - z \overline{\alpha_j})t}  \;,
\end{aligned}
\end{equation*}
and we set $K_j:=\|U_j\|_{L^2}= \|V_j\|_{L^2}$.\medskip

        \underline{Step 1 :  Exponential decay of the error.} In this paragraph only, we write  $r_1=\wt{r_1}$ and $r_2=\wt{r_2}$. Assume that, when $t \longrightarrow 0$,
     \begin{equation}\label{lim}
   \|e^{\kappa |z|}r_1(t)\|_{L^2}+\|e^{\kappa |z|}r_2(t)\|_{L^2}   \longrightarrow 0.
       \end{equation}
    Let us show that  for all $t \geq 0$
                   \begin{equation} \label{cont-R}
  \|r_1\|_{L^2(\C)}^2+  \|r_2\|_{L^2(\C)}^2 \leq C_{\kappa}  e^{- {\kappa \alpha_{min}t}}.
  \end{equation}
  Starting from the relation
               \begin{equation*} 
 i\partial_{t}r_1=  \sum_{j=0}^3 q_j ,
  \end{equation*}
     similarly to \eqref{sim} we compute 
     \begin{eqnarray*} 
\Big|  \frac{d}{dt} \int_\C |r_1|^2dL \Big|   &=& \Big|2 \Im \int_\C  \ov{r_1}  \big(\sum_{j=0}^3q_j\big) dL \Big|\\
      &\leq & 2   \|  r_1\|_{L^2}   \sum_{j=0}^2   \|  q_j\|_{L^2}  ,
  \end{eqnarray*}
and we observe that  the contribution of $q_3$ cancels in the previous line. We now control each term~$ \|  q_j\|_{L^2} $, for $0\leq j \leq 2$. \medskip

  $\bullet$ Control of $\|q_0 \|_{L^2} $. We have already   controlled this term, namely by~\eqref{control-q0} (with $\kappa=0$)
   \begin{equation*} 
   \| q_0\|_{L^2}  \leq C   e^{-{c} \alpha^2_{\sharp}t^2}.
  \end{equation*}~

  $\bullet$ Control of $\|q_1 \|_{L^2} $. We directly obtain
                  \begin{eqnarray}
\|q_1\|_{L^2}  &\leq & C \|r_1 Y^2\|_{L^2}+ C \|r_2 XY\|_{L^2} \nonumber \\
 &\leq & C \big(  \|e^{\kappa |z|}r_1 \|_{L^2}+ \|e^{\kappa |z|}r_2 \|_{L^2}\big)  \|e^{-\kappa |z|}Y \|_{L^\infty} \big(\| Y \|_{L^\infty}  +\| X \|_{L^\infty} \big) \nonumber \\
  &\leq & C \big(  \|e^{\kappa |z|}r_1 \|_{L^2}+ \|e^{\kappa |z|}r_2 \|_{L^2}\big)  \|e^{-\kappa |z|}Y \|_{L^\infty}. \label{q1}
    \end{eqnarray} 
We now use the expression of $Y$ and \eqref{B4}, and we rely on the crucial fact that  $\alpha_j\neq 0$ for all $1 \leq j \leq n$ : 
denote by $\alpha_{min}=\dis \min_{1 \leq j \leq n} |\alpha_j|$, then there exists a universal constant $c_1>0$ such  that 
         \begin{eqnarray*}
  \|e^{-\kappa |z|}Y \|_{L^\infty}& \leq & C   \big(e^{- {\kappa \alpha_{min}t}/2}+ e^{- c_1 \alpha^2_{min}t^2/4}  \big) \\
& \leq & C  e^{- {\kappa \alpha_{min}t}/2}.
      \end{eqnarray*} 
      Therefore, from \eqref{q1} and \eqref{lim} we deduce 
                    \begin{eqnarray*}
\|q_1\|_{L^2}  &\leq&  C  e^{- {\kappa \alpha_{min}t}/2} \big(  \|e^{\kappa |z|}r_1 \|_{L^2}+ \|e^{\kappa |z|}r_2 \|_{L^2}\big) \\
&\leq & C  e^{- {\kappa \alpha_{min}t}/2} .
  \end{eqnarray*}

       $\bullet$ Control of $\|q_2\|_{L^2} $. Similarly we get 
                         \begin{eqnarray*}
\|q_2 \|_{L^2}  &\leq & C \big(  \|e^{\kappa |z|}r_1 \|_{L^2}+ \|e^{\kappa |z|}r_2 \|_{L^2}\big)  \big(   \|e^{-\kappa |z|}X \|_{L^\infty}+  \|e^{-\kappa |z|}Y \|_{L^\infty}\big) \big(\| r_1 \|_{L^\infty}  +\| r_2 \|_{L^\infty}  \big)\\
 &\leq  &C \big(  \|e^{\kappa |z|}r_1 \|_{L^2}+ \|e^{\kappa |z|}r_2 \|_{L^2}\big)  \big(   \|e^{-\kappa |z|}X \|_{L^\infty}+  \|e^{-\kappa |z|}Y \|_{L^\infty}\big)  \\
 & \leq &C  e^{- {\kappa \alpha_{min}t}/2} .
  \end{eqnarray*}

           Putting the previous estimates together we get

             \begin{equation*} 
\Big|  \frac{d}{dt}  \big( \|r_1\|_{L^2(\C)}^2+  \|r_2\|_{L^2(\C)}^2\big) \Big| \leq C   \big( \|r_1\|_{L^2(\C)}^2+  \|r_2\|_{L^2(\C)}^2\big)^{1/2} e^{- {\kappa \alpha_{min}t}/2}
  \end{equation*}
  and by integration on $[t, +\infty)$, using \eqref{lim}, we deduce  \eqref{cont-R}. \medskip

             \underline{Step 2 :  An explicit $L^{\infty}$ bound.}  Consider two multi-solutions
            $$(u, v) =( X+r_1, Y+r_2), \qquad (\wt{u}, \wt{v}) =( X+\wt{r_1}, Y+\wt{r_2}),$$
            where the remainder terms satisfy \eqref{lim}. 
            Then by   \eqref{eqer}, the errors satisfy the equations
           \begin{equation*} 
 i\partial_{t}r_1=  \sum_{j=0}^3  q_j,\qquad  i\partial_{t}\wt{r_1}=  \sum_{j=0}^3 \wt{q_j}.
  \end{equation*}
    Denote by $\rho_j=r_j- \wt{r_j}$ and set
    $$\theta(t) =  \|\rho_1(t)\|^2_{L^2(\C)}+\|\rho_2(t)\|^2_{L^2(\C)} .$$
    Observe that, thanks to \eqref{cont-R}, we already have the bound
                       \begin{equation} \label{cont-th}
  \theta(t)\leq C_{\kappa}   e^{- {\kappa \alpha_{min}t}}.
  \end{equation}
  Denote by 
          \begin{equation*}  
  G(t)=  \| X(t) \|^2_{L^{\infty}}+  \| Y(t) \|^2_{L^\infty}+ \| r_1 (t)\|^2_{L^{\infty}} +\| r_2(t) \|^2_{L^{\infty}} +\| \wt{r_1} (t)\|^2_{L^{\infty}}  +\| \wt{r_2} (t)\|^2_{L^{\infty}},
  \end{equation*}
  and set $K_{max}=\dis \max_{1 \leq j \leq n} K_j$. We now show that there exists a universal constant $c_0>0$ and $t_0>0$ such that for all $t\geq t_0$, 
           \begin{equation} \label{gk}
  G(t) \leq c_0 K^2_{max}. 
    \end{equation}
  Since $\alpha_j \neq \alpha_{\ell}$, one has $\dis \| X(t) \|_{L^{\infty}}  \longrightarrow \max_{1\leq j \leq n}\|U_j\|_{L^{\infty}}$, when $t \longrightarrow +\infty$. Besides, by \eqref{hyp},  $\|U_j\|_{L^{\infty}} \leq C \|U_j\|_{L^{2}}=CK_j$. Therefore, for $t$ large enough, $\dis \| X(t) \|_{L^{\infty}}  \leq 2C  K_{max}$. We proceed similarly for $Y$ and we can use~\eqref{ass-exp} to conclude that~\eqref{gk} holds true. \medskip

  \underline{Step 3 :  A differential inequality.} 
   Let us show that there exists an universal constant $C_0>0$  such that for all  $t\geq t_0$
             \begin{equation} \label{derth}
\big|  \frac{d}{dt} \theta(t) \big|  \leq C_0   K^2_{max} \theta(t).
  \end{equation}

  As in \eqref{sim} we compute 
         \begin{equation} \label{der}
\Big|  \frac{d}{dt} \|\rho_1\|^2_{L^2} \Big|  \leq C   \|\rho_1\|_{L^2}   \sum_{j=1}^3   \|q_j-\wt{q_j}\|_{L^2},
  \end{equation}
  where we observe that $q_0=\wt{q_0}$.  Using the expressions \eqref{exp-q} we bound the previous terms. In the sequel, we assume that $t\geq t_0$.\medskip
    
    $\bullet$ Control of $\|q_1-\wt{q_1}\|_{L^2} $. Using~\eqref{gk} we directly obtain
                  \begin{eqnarray*}
\|q_1-\wt{q_1}\|_{L^2}  &\leq & C \|\rho_1 Y^2\|_{L^2}+ C \|\rho_2 XY\|_{L^2}\\
 &\leq & C \big(  \|\rho_1 \|_{L^2}+ \| \rho_2 \|_{L^2}\big)   \big(\| X \|^2_{L^\infty}  +\| Y \|^2_{L^\infty} \big)\\
 &\leq & C \big(  \|\rho_1 \|_{L^2}+ \| \rho_2 \|_{L^2}\big) K^2_{max}.
    \end{eqnarray*} ~

       $\bullet$ Control of $\|q_2-\wt{q_2}\|_{L^2} $. Similarly we get 
                         \begin{eqnarray*}
\|q_2-\wt{q_2}\|_{L^2}  &\leq &
 C \big(  \|\rho_1 \|_{L^2}+ \| \rho_2 \|_{L^2}\big)  \big(   \| X \|_{L^\infty}+  \| Y \|_{L^\infty}\big) \big(\| r_1 \|_{L^\infty}  +\| r_2 \|_{L^\infty} +\| \wt{r_1} \|_{L^\infty}  +\| \wt{r_2} \|_{L^\infty}\big)\\
 &\leq  &C \big(  \| \rho_1 \|_{L^2}+ \| \rho_2 \|_{L^2}\big) K^2_{max}.
  \end{eqnarray*}~

           $\bullet$ 
           Control of $\|q_3-\wt{q_3}\|_{L^2} $. By the same manner, we have
               \begin{eqnarray*}
\|q_3-\wt{q_3}\|_{L^2}  &\leq &   C \big( \|\rho_1 \|_{L^2}+ \| \rho_2 \|_{L^2}\big)    \big(\| r_1 \|_{L^\infty}  +\| r_2 \|_{L^\infty} +\| \wt{r_1} \|_{L^\infty}  +\| \wt{r_2} \|_{L^\infty}\big)^2\\
  &\leq &    C \big( \|\rho_1 \|_{L^2}+ \| \rho_2 \|_{L^2}\big) K^2_{max}    .
    \end{eqnarray*} ~
    
       Therefore, by \eqref{der} and the previous estimates
             \begin{equation*}  
\Big|  \frac{d}{dt} \|\rho_1\|^2_{L^2} \Big|  \leq C   ( \|\rho_1 \|^2_{L^2}+ \| \rho_2 \|^2_{L^2}\big)K^2_{max}.
  \end{equation*}
  The same bound holds for $\dis \Big|  \frac{d}{dt} \|\rho_2\|^2_{L^2} \Big|  $, and we deduce \eqref{derth}. \medskip
  
  \underline{Step 4 : Backward  Gr\"onwall.}  Let $t_0 \leq t \leq M$. We integrate \eqref{derth} on $[t,M]$
                 \begin{equation*}
\theta(t)  \leq    \theta(M)+C_0 K^2_{max} \int_{t}^M \theta(s)ds      .
    \end{equation*}
    We are able to apply Lemma~\ref{lem-Gro}    and get for all $0 \leq t \leq M$
\begin{eqnarray*}
\theta(t) & \leq &   \theta(M)+C_0K^2_{max}\theta(M) \int_{t}^M \exp\big(C_0 K^2_{max}\int_t^{\sigma}d\tau \big) d\sigma    \\
& \leq &   \theta(M)+C_0K^2_{max}\theta(M) \int_{t}^M e^{C_0K^2_{max} \sigma} d\sigma \\
& \leq &   \theta(M) \big(1+  e^{C_0 K^2_{max} M }  \big).
    \end{eqnarray*}
    Next, by \eqref{cont-th}
                    \begin{equation*}  
\theta(t) \leq C  e^{- {\kappa \alpha_{min}M}} \big(1+  e^{C_0 K^2_{max} M}  \big),
 \end{equation*}
    which tends to 0 when $M\longrightarrow +\infty$, provided that $\kappa >c_0 K^2_{max}/\alpha_{min}$ is chosen large enough. As a conclusion $\theta(t)=0$ for all $t\geq t_0$ which in turn implies that $\theta \equiv 0$ on $\R$, since equation~\eqref{eq1} is globally well-posed on~$\R$. In the case where the traveling waves take the form \eqref{prog}, one has 
    $|\alpha| = \frac{\sqrt{3}}{32\pi}K^2$, and \eqref{dela} follows.


          \section{Proof of Theorem \ref{thm-super}} 
   
        The proof of this result is in the same spirit as the proof of Theorem~\ref{thm-multi}, but here the error estimate will be done   starting from $t=0$ instead of considering times $t\gg 1$. \medskip

Assume that  $(U_0,V_0)\in  \E \times \E$ is a traveling wave solution to \eqref{eq1} in the sense~\eqref{trav}, and that there exist $c_0,C>0$ such that 
\begin{equation*} 
|U_0(z)|+|V_0(z)| \leq C e^{-c_0 |z|^2}.
\end{equation*}
We moreover assume that $\|U_0\|_{L^2}= \|V_0\|_{L^2}=1$, and we denote by $\alpha_0 \in \C$ the speed of this traveling wave and consider $(\lambda_0, \mu_0)$  the phase parameters. Next, for $a_j, b_j,\gamma_j \in \C$, $K\geq 0$, and $\theta \in \R$, we define 
\begin{equation*} 
 U_j=  K e^{i a_j}R_{\gamma_j}L_{\theta}U_0, \qquad  V_j= K e^{i b_j} R_{\gamma_j} L_{\theta} V_0   \;.
\end{equation*}
By~\cite[Proposition~1.8 $(iv)$]{Schw-Tho}, each couple $(U_j,V_j)$ defines a traveling wave
\begin{equation*} 
\begin{aligned} 
& X_j(t,z)= e^{-i\lambda_j t}U_j(z+\alpha t) e^{\frac{1}{2}(\overline z \alpha - z \overline{\alpha})t}\\[4pt]
&   Y_j(t,z)= e^{-i\mu_j t}V_j(z+\alpha t) e^{\frac{1}{2}(\overline z \alpha- z \overline{\alpha})t}  \;,
\end{aligned}
\end{equation*}
with speed $\alpha= \alpha_0 K^2 e^{-i \theta }$ and where $(\lambda_j, \mu_j)= K^2\big(\lambda_0+2 \Im( \ov{\alpha_0} \gamma_j e^{i \theta}), \mu_0+2 \Im( \ov{\alpha_0} \gamma_j e^{i \theta})\big)$. 
We consider the solution $(u,v)$ to \eqref{eq2}
 \begin{equation*}
 u=\sum_{j=1}^n X_j+r_1:=X+r_1,\qquad v=\sum_{j=1}^n Y_j+r_2:=Y+r_2,
 \end{equation*}
 such that  $(r_1(0), r_2(0))=(0,0)$. We now have to estimate the error term $(r_1,r_2)$ and by reversibility of the equation it is enough to consider the case $t\geq 0$.\medskip

We write the expansion \eqref{eqer}--\eqref{exp-q}, and similarly to \eqref{sim} we obtain
     \begin{equation*} 
   \frac{d}{dt} \int_\C  |r_1|^2dL  \leq  C   \| r_1\|_{L^2}   \sum_{j=0}^2   \|  q_j\|_{L^2}. 
  \end{equation*}
We now    estimate each term $\| q_j\|_{L^2} $. Denote by $\eta(t):= \|r_1(t)\|_{L^2(\C)}^2+  \|r_2(t)\|_{L^2(\C)}^2$.
\medskip

$\bullet$ Control of $\| q_0\|_{L^2} $.  For all $1 \leq j \leq n$, $ i\partial_{t}X_j=  \Pi \big(|Y_j|^2 X_j\big) $, thus
$$q_0=\sum_{\substack{1 \leq j, k, \ell  \leq n \\ (j,k, \ell) \neq (j,j,j) }} \Pi \big(Y_j \ov{Y_k}X_{\ell} \big).   $$
Assume for instance that $j\neq k$, so that $\gamma_j \neq \gamma_k$. Then by \eqref{conti2} 
   \begin{eqnarray*} 
\|  \Pi \big(Y_j \ov{Y_k}X_{\ell} \big) \|_{L^2}&\leq &C\|   Y_j {Y_k}X_{\ell}  \|_{L^2} \\
&= &CK^3 \|  (R_{\alpha t+\gamma_j}V_0 ) (R_{\alpha t+\gamma_k}V_0)  (R_{\alpha t+\gamma_\ell}U_0)  \|_{L^2}\\
&= &CK^3 \|   (R_{\gamma_j-\gamma_\ell }V_0 ) (R_{\gamma_k-\gamma_\ell }V_0) U_0  \|_{L^2} \\
&\leq &C  K^3  \| (R_{\gamma_j-\gamma_\ell }V_0 ) (R_{\gamma_k-\gamma_\ell }V_0) \|_{L^{\infty}}  \| U_0  \|_{L^2} .
  \end{eqnarray*}
  Next by~\eqref{B1} 
     \begin{equation*} 
\|  \Pi \big(Y_j \ov{Y_k}X_{\ell} \big) \|_{L^2}
\leq  C K^3 e^{ -\frac{c_0}2|\gamma_j-\gamma_k|^2}\|   U_0  \|_{L^2} \\
= C  K^3  e^{ -\frac{c_0}2|\gamma_j-\gamma_k|^2}.
  \end{equation*}
The other terms are treated similarly, as a consequence   
   \begin{equation*} 
   \|q_0\|_{L^2}  \leq C  K^3  e^{- \frac{c_0}2\eps^{-2}}.
  \end{equation*}~
  
$\bullet$ The controls of  $\| q_j\|_{L^2} $ for $1\leq j\leq 2$ are obtained as in the proof of Theorem~\ref{thm-multi}, and we get
   \begin{equation*} 
\|   q_j \|_{L^2}\leq C   n^2K^2\eta^{1/2}.
  \end{equation*}~

  Putting the previous estimates together we can write 
             \begin{equation*}
  \frac{d}{dt} \eta(t)  \leq C  \eta(t)^{1/2}\big(n^2K^2\eta(t)^{1/2}+ K^3  e^{- \frac{c_0}2 \eps^{-2}}\big) \leq C n^2K^2\eta(t)+ CK^4  e^{-{c_0} \eps^{-2}},
  \end{equation*}
 and by integration
              \begin{equation*}
 \eta(t)  \leq C n^2K^2\int_0^t\eta(s)ds+ CK^4 t e^{-{c_0} \eps^{-2}}.
  \end{equation*}
  Finally, the Gr\"onwall estimate implies 
               \begin{equation*}
 \eta(t) = \|r_1(t)\|_{L^2(\C)}^2+  \|r_2(t)\|^2_{L^2(\C)}\leq  CK^4 t e^{-{c_0} \eps^{-2}+  C n^2K^2t}.
  \end{equation*}
In particular, when $c_0=1/2$ we obtain \eqref{boundr}.


\section{Proof of Theorem \ref{thm-lin} }

We will adopt the formalism of  \cite[Section~7]{Faou-Rapha} so that the result of Theorem~\ref{thm-lin} will be a direct application of~\cite[Proposition~7.1]{Faou-Rapha}.

As in \cite{Faou-Rapha} we denote by $\mathcal{T}$ the CR trilinear operator which was first defined in \cite{FGH} and further studied in \cite{GHT1,GHT2}. This operator~$\mathcal{T}$ is defined by 
$$(u_1,u_2,u_3) \mapsto \mathcal{T}(u_1,u_2,u_3)(w):=\int_{\R^2}\int_{\R}u_1(x+w)\ov{u_2(x+\lambda x^\perp +w)}u_3(\lambda x^\perp+w)d\lambda dx,$$ 
where for $x=(x_1,x_2)\in \R^2$ we have set $x^\perp=(-x_2,x_1)$. By \cite[Lemma 8.2]{GHT1}, when it is restricted to the Bargmann-Fock space $\E$, the operator $\mathcal{T}$ can be simply expressed using $\Pi$ : for all $u_1,u_2,u_3 \in \E$, 
\begin{equation*}
\mathcal{T}(u_1,u_2,u_3 )= {\pi^2} \Pi\big(u_1 \ov{u_2} u_3 \big).
\end{equation*}
 
Next, following \cite{Faou-Rapha}, we define 
$$\mathcal{T}[F]u:=\mathcal{T}(F,F,u),$$
so that $\mathcal{T}[F]u=  {\pi^2} \Pi\big(|F|^2 u \big)$ when $F,u\in \E$.
Consider the solution $(u,v)$ given by Theorem \ref{thm-multi},  define $\wt{u}(s,z) := {u}(e^{ s},z)$ and $\dis F(s,z) := \frac{e^{ s/2}}{\pi} {v}(e^{ s},z)$. Then 
$$i \partial_s \wt{u}=\pi^2 \Pi(|F|^2 \wt{u})=\mathcal{T}[F] \wt{u},$$
for all $s\in \R$. Recall the definition~\eqref{def-sobo} of the Sobolev space $\HH^s(\C)$. Using the explicit representation~\eqref{sol-thm}, we observe that for all $\sigma\geq 0$ and $k \geq 0$ we have the bounds
$$\| \partial^k_s F(s)\|_{\HH^{\sigma}(\C)}+ \| \partial^k_s \wt{u}(s)\|_{\HH^{\sigma}(\C)} \leq C e^{c s},\quad \forall s \geq 0.$$
Moreover, when $s\longrightarrow +\infty$
$$\|   \wt{u}(s)\|_{\HH^1(\C)}  \sim  C e^s.$$
Therefore,~\cite[Proposition~7.1]{Faou-Rapha} can be applied: we set 
$$V(t ):=\frac{1}{t \ln t}  \big|e^{-it H} F(\ln \ln t)\big|^2,$$
which satisfies \eqref{potendecay}. Next by~\cite[Proposition~7.1]{Faou-Rapha}, there exists $r_0  \in \mathcal{C}\big(\R  ; \HH^1(\C)\big)$ which satisfies 
  \begin{equation*} 
  \|{r_0}(t)   \|_{\HH^1(\C)} \longrightarrow 0, \qquad t \longrightarrow +\infty,
  \end{equation*}
and such that 
$$\psi(t):=e^{-it H}  \wt{u}( \ln \ln t)+ r_0(t)= e^{-it H}  {u}(  \ln t)+ r_0(t)$$
is solution to the equation \eqref{harm}. Let us give a better description of $\psi$. 
By \eqref{sol-thm} we have
$$\wt{u}(\ln \ln t)= {u}(  \ln t)= \sum_{j=1}^n e^{-i\lambda_j \ln t}  R_{\alpha_j \ln t }U_j+r_1(\ln t),$$
thus 
    \begin{eqnarray*} 
 \psi(t)&=&e^{-it H}  \sum_{j=1}^n e^{-i\lambda_j \ln t}  R_{\alpha_j \ln t }U_j+ \big(e^{-it H} r_1(\ln t)+r_0(t)\big)\\
 &=&  \sum_{j=1}^n e^{-i\lambda_j \ln t}e^{-2i t} L_{-2  t} R_{\alpha_j \ln t }U_j  +\eta(t),
    \end{eqnarray*} 
where $\eta(t):=e^{-it H} r_1(\ln t)+r_0(t) $ satisfies 
  \begin{equation*} 
  \|\eta(t)   \|_{\HH^1(\C)} \longrightarrow 0, \qquad t \longrightarrow +\infty.
  \end{equation*}
This completes to proof of Theorem \ref{thm-lin}.


  \appendix
  
    \section{On the decay of stationary solutions}
  
  In this section, we show that any stationary solution $(u(t),v(t))=(e^{-i \lambda t}U, e^{-i \mu t}V)$ to \eqref{eq0} with $(U,V) \in \E \times \E$ has a Gaussian decay. Let $\lambda,\mu \in \R$, $\sigma \in\{-1,1\}$ and consider the system
   \begin{equation} \label{eqs}
\left\{
\begin{aligned}
& \lambda U= \Pi(|V|^2 U)     \\
& \mu V= \sigma \Pi(|U|^2 V). 
\end{aligned}
\right.
\end{equation}
Then we have a natural extension of~\cite[Theorem~5.3]{GGT} :
  
  \begin{thm}\label{thmDec}
Let $(U, V) \in \mathcal E$ be a solution of \eqref{eqs}. Then, for any
$$
\eta>\eta_0 = \left( \frac{1}{2} + \frac{1}{2} \frac{\log 2}{\log 3} \right)^{-1} \sim 1.226\dots,
$$
the following estimates hold true,
\begin{equation}\label{decay1}
\ |U(z)|   \leq C_\eta e^{|z|^\eta-\frac{1}{2}|z|^2} , \qquad |V(z)| \leq C_\eta e^{|z|^\eta-\frac{1}{2}|z|^2}, \qquad  \forall z\in \C.
 \end{equation}
\end{thm}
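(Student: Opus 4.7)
The plan is to adapt the single-equation proof of~\cite[Theorem~5.3]{GGT} to the coupled system~\eqref{eqs}. I will write $U(z) = f(z) e^{-|z|^2/2}$ and $V(z) = g(z)e^{-|z|^2/2}$ with $f,g$ entire, so that the sought pointwise inequality~\eqref{decay1} reduces to the subquadratic growth estimate $|f(z)|, |g(z)| \leq C_\eta e^{|z|^\eta}$ for all $\eta > \eta_0$. Substituting these representations into~\eqref{eqs} and using the explicit integral kernel of $\Pi$, the stationary equations become the coupled fixed-point relations
\begin{equation*}
\lambda f(z) = \frac{1}{\pi}\int_\C e^{z\bar w - 2|w|^2}|g(w)|^2 f(w)\,dL(w), \qquad \sigma\mu\, g(z) = \frac{1}{\pi}\int_\C e^{z\bar w - 2|w|^2}|f(w)|^2 g(w)\,dL(w),
\end{equation*}
on which the proof will run a bootstrap. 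The initial input is the Carlen estimate~\eqref{hyp}, which gives $U,V \in L^\infty(\C)$, equivalently $|f|,|g| \leq C e^{|z|^2/2}$.

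The core is an iterative improvement of the weight. Starting from a working estimate $|f|,|g| \leq C_k e^{\Phi_k(|w|)}$, I would plug it into the integral identities above and produce a new estimate of the same form, with a weight $\Phi_{k+1}$. This step can be implemented either (a) by expanding $U$ in the Hermite basis $(\phi_n)$, bounding the coefficients $\widehat U_n$ through $|\lambda||\widehat U_n| \leq \int |V|^2|U||\phi_n|\,dL$ using the current pointwise estimate, and then reassembling $|f(z)|$ via the Bargmann--Fock identity $|f(z)| \leq \sum |\widehat U_n||z|^n/\sqrt{\pi n!}$; or (b) by analysing the integral directly via polar coordinates, the modified Bessel function $I_0$ for the angular average, and Laplace's method in the radial variable. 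Either route produces an explicit map $\Phi_k \mapsto \Phi_{k+1}$, and the aim will be to show that after finitely many iterations one has $\Phi_k(r) \leq r^\eta$ for any fixed $\eta > \eta_0$, with a constant that can be tracked.

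The threshold $\eta_0 = 2\log 3/\log 6$ should emerge from the algebraic balance in this iteration: the factor $3$ comes from cubing the working estimate (three copies of $e^{\Phi_k}$ from $|g|^2 f$ and symmetrically from $|f|^2 g$), the factor $2$ comes from the effective weight $e^{-2|w|^2}$ in the kernel, and the resulting self-consistency equation on a self-similar profile $r^\eta$ produces precisely the relation $1/\eta_0 = \tfrac12 + \log 2/(2\log 3)$. For any $\eta > \eta_0$ the iteration contracts geometrically, and its limit gives the desired bound.

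The hardest part will be quantitative control of the constants $C_k$ near the critical exponent: as $\eta \downarrow \eta_0$ the iteration becomes marginal, and one needs explicit error terms in the saddle-point asymptotics to keep $C_k$ uniformly bounded through all steps. By contrast, the coupled structure is essentially cosmetic: because each equation depends on the other unknown only through its squared modulus, the bootstrap runs in parallel on $(|f|,|g|)$ with identical estimates at every step, and the sign $\sigma \in \{\pm 1\}$ never enters the pointwise bound. Modulo this analytic control, the coupled case reduces to a direct transcription of the scalar argument of~\cite{GGT}.
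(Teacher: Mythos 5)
There is a genuine gap: the proposed pointwise bootstrap cannot produce the stated bound, for two concrete reasons. First, it cannot start. Carlen's estimate \eqref{hyp} only gives $|f|,|g|\leq Ce^{|z|^2/2}$, and this weight is a fixed point of your iteration: inserting $|g|^2|f|\leq C^3e^{\frac32|w|^2}$ into the kernel and maximizing $|z|r-2r^2+\tfrac32 r^2$ over $r$ returns exactly $e^{|z|^2/2}$, so no improvement comes out of the first step. The paper (following \cite[Theorem~5.3]{GGT}) uses a different idea here: since $|U(w)|+|V(w)|\to0$ as $|w|\to\infty$, one sets $M_n=\sup_{|w|>\kappa^{-n}}(|U|+|V|)$ and proves $M_n\leq C_0e^{-\frac{(1-\kappa)^2}{3}\kappa^{-2n}}+C_0M_{n-1}^3$ by splitting the integral at the scale $\kappa^{-(n-1)}$; it is the induction on scales driven by the cubic smallness $M_{n-1}^3$ (i.e.\ smallness at infinity, not growth exponents) that yields a first Gaussian decay $|U|+|V|\leq Ce^{-\sigma|z|^2}$. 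Your proposal never invokes this smallness, so the iteration is stuck at the marginal weight.

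Second, even granting Gaussian decay, any scheme that puts absolute values under the integral --- your route (b), and also route (a), since the bound $|\lambda||\widehat U_n|\leq\int|V|^2|U||\phi_n|$ retains exactly the same information --- saturates strictly below the theorem. If $|U|,|V|\leq e^{-\sigma|w|^2}$, then $|\Pi(|V|^2U)(z)|\leq\frac1\pi\int e^{-\frac12|z-w|^2-3\sigma|w|^2}dL(w)\lesssim e^{-\frac{3\sigma}{1+6\sigma}|z|^2}$, and the map $\sigma\mapsto\frac{3\sigma}{1+6\sigma}$ has attracting fixed point $\sigma=\tfrac13$; in particular the target bound $e^{|z|^\eta-\frac12|z|^2}$ is not even reproduced by a single application (it degrades to $e^{-\frac38|z|^2+C|z|^\eta}$), so there is no Laplace-method self-consistency on a profile $r^\eta$, and your heuristic derivation of $\eta_0$ from cubing plus the factor $2$ in the kernel does not survive the computation. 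The threshold $\eta_0=(\tfrac12+\gamma_0)^{-1}$ with $\gamma_0=\frac{\log2}{2\log3}$ actually arises on the coefficient side: writing $U=\sum c_n\phi_n$, $V=\sum d_n\phi_n$, the system becomes $\lambda c_k=\frac{1}{2\pi}\sum_{k+\ell=m+n}\frac{(k+\ell)!}{2^{k+\ell}\sqrt{k!\,\ell!\,m!\,n!}}\overline{d_\ell}d_mc_n$ (and similarly for $d_k$), and a bootstrap in this discrete resonant structure upgrades the geometric decay $|c_k|+|d_k|\leq Cr^k$ (which follows from the Gaussian decay of Step 1) to $|c_k|+|d_k|\leq Ck^{-\gamma k}$ for every $\gamma<\gamma_0$, which only then converts back to $|U(z)|+|V(z)|\leq Ce^{C|z|^\delta-\frac12|z|^2}$ with $\delta=(\tfrac12+\gamma)^{-1}$. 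This exploitation of holomorphy through the constraint $k+\ell=m+n$ and the weight $2^{-(k+\ell)}(k+\ell)!/\sqrt{k!\,\ell!\,m!\,n!}$ is precisely what is destroyed when you take absolute values in the integral. Your remarks that the coupling is essentially cosmetic and that $\sigma\in\{\pm1\}$ plays no role are correct and do match the paper, but the analytic core of the proof is missing.
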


It is classical that a bound of the form \eqref{decay1} gives an estimate of the number of zeros of the corresponding function. More precisely, as proven in \cite[Corollary~5.5]{GGT}, if one denotes by 
 $$
N(R) = \# \big\{ z \in \mathbb{C} \; \mbox{such that} \;\, U(z) = 0 \; \mbox{and} \; |z|<R \big\} ,
$$
then for any $\eta > \eta_0$, 
$$
\frac{N(R)}{R^\eta} \longrightarrow 0 \quad \mbox{ as } \;\;R \longrightarrow +\infty,
$$
and similarly for $V$. 

\begin{proof}
The argument follows the main lines of~\cite[Theorem~5.3]{GGT} where a similar result is established for the solutions $U_0\in \E$ of the equation 
$$
 \lambda U_0= \Pi(|U_0|^2 U_0).
$$
There are very few changes in the proof, and we just give the main steps of the argument. \medskip

We write the expansion $\dis U=\sum_{n=0}^{+\infty} c_n \phi_n$ and $\dis V=\sum_{n=0}^{+\infty} d_n \phi_n$.\medskip

\underline{Step 1 (Step 1 in \cite[paragraph 5.3.]{GGT}):} For $0<\kappa <1$, we set $M_n = \sup_{|w|> \kappa^{-n}} \big( |U(w)|+|V(w)|\big) $ and we prove 
$$
M_n \leq C_0 e^{-\frac{(1-\kappa)^2}{3} \kappa^{-2n}} + C_0 M_{n-1}^3,
$$
for some constant $C_0>0$. By an induction argument we show that there exists $\sigma>0$ such that 
\begin{equation} \label{eqa}
|U(z)|+ |V(z)| \leq C e^{-\sigma |z|^2}.
\end{equation}~

\underline{Step 2 (Step 2 in \cite[paragraph 5.3.]{GGT}):} The estimate~\eqref{eqa} implies that there exists $0<r<1$ such that
\begin{equation} \label{eqb}
|c_k| + |d_k| \leq  C r^k.
\end{equation}~

\underline{Step 3 (Steps 1 and 2  in \cite[paragraph 5.2.]{GGT}):}  In the coordinates $(c_n), (d_n)$, the system~\eqref{eqs} reads 
   \begin{equation*}  
\left\{
\begin{aligned}
& \lambda  c_k = \frac{1}{2 \pi} \sum_{\substack{ \ell,m,n \geq 0 \\ k + \ell = m + n }} \frac{(k+\ell)!}{2^{k+\ell} \sqrt{k! \ell ! m! n!}} \overline{d_\ell} d_m c_n, \quad k\geq 0   \\
&   \mu  d_k = \frac{\sigma }{2 \pi} \sum_{\substack{ \ell,m,n \geq 0 \\ k + \ell = m + n }} \frac{(k+\ell)!}{2^{k+\ell} \sqrt{k! \ell ! m! n!}} \overline{c_\ell} c_m d_n, \quad k\geq 0.
\end{aligned}
\right. 
\end{equation*}
With a bootstrap argument, starting from \eqref{eqb}, we show that for any $\gamma <\gamma _0=\frac{\log2}{2\log3}$ we have 
\begin{equation} \label{eqc}
|c_k| + |d_k| \leq  C k^{-\gamma k}.
\end{equation}~

\underline{Step 4 (Step 3 in \cite[paragraph 5.3.]{GGT}):} The estimate \eqref{eqc} implies that for all $\gamma <\gamma _0$
$$
|U(z)|+ |V(z)|  \leq C  e^{C | z|^{\delta}-\frac{1}{2} |z|^2},
$$
with $\delta = (\frac{1}{2} + \gamma)^{-1}$.
\end{proof}

  \section{Technical results}
  
  We reproduce a backward Gr\"onwall estimate taken from \cite[Lemma B.1]{Faou-Rapha}.
  
    \begin{lem}\label{lem-Gro}
  Let $t_0>0$ and $M>0$. Assume that $\beta>0$ and $\alpha>0$  are functions defined  on $(t_0,M)$, and that $F$ satisfies for all $t\in (t_0,M)$
  $$F(t) \leq \alpha(t)+\int_t^M \beta(\sigma) F(\sigma) d \sigma.$$
  Then for all $t\in (t_0,M)$
    $$F(t) \leq \alpha(t)+\int_t^M \alpha(\sigma) \beta(\sigma) \exp \big(\int_t^\sigma \beta(\tau) d\tau\big)    d \sigma.$$
  \end{lem}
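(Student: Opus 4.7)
The approach is the classical reverse-time Grönwall trick: convert the integral inequality into a differential inequality for the tail integral, then solve it with an integrating factor.

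More precisely, I would introduce the auxiliary function
\[
G(t) := \int_{t}^{M} \beta(\sigma) F(\sigma)\, d\sigma,
\]
which is absolutely continuous on $(t_0, M)$, satisfies $G(M) = 0$, and has derivative $G'(t) = -\beta(t) F(t)$. The hypothesis reads $F(t) \leq \alpha(t) + G(t)$, so multiplying by $-\beta(t) \leq 0$ yields the differential inequality
\[
G'(t) \;\geq\; -\beta(t)\bigl(\alpha(t) + G(t)\bigr), \qquad t \in (t_0, M).
\]

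Next I would introduce the integrating factor $\mu(t) := \exp\bigl(-\int_{t}^{M}\beta(\tau)\,d\tau\bigr)$, which is positive and satisfies $\mu'(t) = \beta(t)\mu(t)$. A direct computation gives
\[
(\mu G)'(t) = \mu'(t) G(t) + \mu(t) G'(t) \;\geq\; \beta(t)\mu(t) G(t) - \mu(t)\beta(t)\bigl(\alpha(t) + G(t)\bigr) = -\mu(t)\beta(t)\alpha(t).
\]
Integrating this inequality from $t$ to $M$ and using $G(M) = 0$ produces
\[
-\mu(t) G(t) \;\geq\; -\int_{t}^{M} \mu(\sigma)\beta(\sigma)\alpha(\sigma)\, d\sigma,
\]
so that
\[
G(t) \;\leq\; \int_{t}^{M} \frac{\mu(\sigma)}{\mu(t)}\,\beta(\sigma)\alpha(\sigma)\, d\sigma \;=\; \int_{t}^{M} \alpha(\sigma)\beta(\sigma)\,\exp\!\Bigl(\int_{t}^{\sigma}\beta(\tau)\,d\tau\Bigr) d\sigma,
\]
where the last equality comes from $\mu(\sigma)/\mu(t) = \exp(\int_t^\sigma \beta(\tau)\,d\tau)$.

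Finally, injecting this bound into $F(t) \leq \alpha(t) + G(t)$ gives the desired estimate. There is no real obstacle here: the only point requiring minor care is the sign of the derivative of $G$ (because the integral runs to the right endpoint $M$, the correct integrating factor is $\exp(-\int_t^M \beta\,d\tau)$ rather than the usual $\exp(\int_0^t \beta\,d\tau)$), and the hypothesis $\beta, \alpha > 0$ ensures that all inequalities preserve their direction under multiplication.
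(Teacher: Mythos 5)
Your proof is correct: the reduction to the differential inequality for the tail $G(t)=\int_t^M\beta F$, the integrating factor $\exp\bigl(-\int_t^M\beta\bigr)$, and the identity $\mu(\sigma)/\mu(t)=\exp\bigl(\int_t^\sigma\beta\,d\tau\bigr)$ all check out, and the signs are handled properly. Note that the paper itself does not prove this lemma --- it simply reproduces it from Faou--Rapha\"el (Lemma B.1 there) --- so your argument is the standard backward Gr\"onwall proof that the citation stands in for, and it is a complete, self-contained justification (modulo the tacit regularity needed for $G$ to be absolutely continuous, which holds in all the paper's applications where $F$, $\alpha$, $\beta$ are continuous).
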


\end{document}